\newtheorem{remark}{Remark}[section]
\newtheorem{assumption}{Assumption}[section]
\newcommand{\p}{{\partial}}  
\newcommand{\nab}{\nabla}
\newcommand{\mct}{\mathcal{T}_h}
\newcommand{\bH}{{\bm H}}
\newcommand{\bL}{{\bm L}}
\newcommand{\Div}{{\rm div}\,}
\newcommand{\bv}{{\bm v}}
\newcommand{\bV}{{\bm V}}
\newcommand{\bX}{{\bm X}}
\newcommand{\bS}{{\bm S}}
\newcommand{\bI}{{\bm I}}
\newcommand{\bz}{\bm z}
\newcommand{\be}{{\bm e}}
\newcommand{\bbP}{\mathbb{P}}
\newcommand{\bpsi}{\bm \psi}
\newcommand{\bY}{\bm Y}
\newcommand{\bbR}{\mathbb{R}}
\newcommand{\bw}{\bm w}
\newcommand{\bn}{\bm n}
\DeclareMathOperator*{\argmin}{arg\,min}
\title{Pressure-robustness in quasi-optimal a priori estimates for the Stokes problem}
\author{Alexander Linke\thanks{Weierstrass Institute for Applied Analysis and Stochastics, Berlin, Germany (alexander.linke@wias-berlin.de)}
\and{Christian Merdon}\thanks{Weierstrass Institute for Applied Analysis and Stochastics, Berlin, Germany (Christian.Merdon@wias-berlin.de)}
\and{Michael Neilan}\thanks{Department of Mathematics, University of Pittsburgh, Pittsburgh, PA 15260 (neilan@pitt.edu). Neilan was partial supported
by the NSF through grant DMS--1719829.}}
\shorttitle{Pressure-robust quasi-optimality for the Stokes problem}
\begin{document}

\maketitle

\begin{abstract}
Recent analysis of the divergence
constraint in the incompressible Stokes/Navier--Stokes problem
has stressed the importance
of  equivalence classes of forces and how it plays a fundamental role 
for an accurate space discretization.  Two forces in the momentum balance are
velocity--equivalent if they lead to the same velocity solution,
i.e., if and only if the forces differ by only a 
gradient field.
Pressure-robust space discretizations are designed to
respect these equivalence classes. 
One way to achieve pressure--robust schemes
is to introduce a non--standard discretization of the right--side
forcing term for any inf--sup stable mixed finite element method.
This modification leads to pressure--robust and optimal--order
discretizations, but 
a proof was only available for smooth situations and remained open in the case of minimal regularity, where it cannot be
assumed that the vector Laplacian of the velocity is at least
square-integrable. This contribution closes this gap by
delivering a general estimate for the consistency error that
depends only on the regularity of the data term.
Pressure-robustness of the estimate is achieved by the fact that
the new estimate only depends on the $L^2$ norm of the
Helmholtz--Hodge projector of the data term
and not on the $L^2$ norm of the entire data term. Numerical examples illustrate the theory.

\end{abstract}

\section{Introduction}
Classical mixed finite element theory for the steady
Stokes problem
\begin{equation} \label{eq:cont:stokes}
\begin{split}
  -\nu \Delta \bv + \nabla p & = \bm f, \\
   -\Div \bv & = g
\end{split}
\end{equation}
with inhomogeneous Dirichlet boundary data,
$\bm f \in \bm L^2(\Omega)$ and $g \in L^2_0(\Omega)$ emphasizes 
that the divergence constraint
$-\Div\bv=g$
requires an appropriate discrete mimicking of the
{surjectivity} of the divergence operator
$\mathrm{div} \!\!: \bH^1_0(\Omega) \to L^2_0(\Omega)$
in order
to guarantee optimal convergence properties, see e.g.\ \cite{MR3097958,jlmnr:sirev}.
Recently it has been stressed that
the divergence constraint in the Stokes
problem
 naturally induces a semi-norm and
corresponding equivalence classes of forces, which
require a \emph{second challenge} for an accurate
space discretization:
two forces
$\bm f_1 \in \bm L^2(\Omega)$ and
$\bm f_2 \in \bm L^2(\Omega)$ are \emph{velocity-equivalent}
\cite{2018arXiv180810711G}
\begin{equation}
  \bm f_1 \simeq \bm f_2,
\end{equation} 
if they lead to the same velocity solution $\bv$ in
the Stokes problem \eqref{eq:cont:stokes} --- and this 
happens if and only if both forces differ by a 
gradient field \cite{jlmnr:sirev,MR3824769}, i.e., 
\begin{equation}
  \bm f_1 \simeq \bm f_2  \qquad \Leftrightarrow
   \qquad \exists{\phi \in H^1(\Omega)/\mathbb{R}} : \bm f_2 = \bm f_1 + \nabla \phi.
\end{equation}
The argument is straightforward: denote by
$(\bv_1, p_1)$ and
$(\bv_2, p_2)$ the pairs of velocity and pressure
solutions corresponding to the forces $\bm f_1$ and
$\bm f_2=\bm f_1 + \nabla \phi$.
Then, the difference of the solutions
$(\delta \bv, \delta p) := (\bv_2 - \bv_1, p_2-p_1)
\in \bm H^1_0(\Omega) \times L^2_0(\Omega)$
fulfills the incompressible Stokes equations
$-\nu \Delta (\delta \bv) + \nabla (\delta p) = \nabla \phi$,
$\mathrm{div} (\delta \bv)=0$ with
homogeneous Dirichlet boundary data.
This problem has
the unique solution
$(\delta \bv, \delta p) = (\bm 0, \phi)$, and
thus 
$\bm f_1$ and $\bm f_2 = \bm f_1 + \nabla \phi$
are \emph{velocity-equivalent}
due to $\delta \bv = \bm 0$.

In conclusion one observes that the
velocity solution $\bv$ of \eqref{eq:cont:stokes}
is determined by the following data:
\begin{enumerate}
\item  Dirichlet boundary data,
\item the data $g$,
\item and the Helmholtz--Hodge projector of the data $\bm f$,
which is defined by
$$
  \mathbb{P}(\bm f)
    := \argmin_{\phi \in H^1(\Omega)} \| \bm f - \nabla \phi
      \|_{\bm L^2(\Omega)}, 
$$
\end{enumerate}
while the data term $\bm f - \mathbb{P}(\bm f)$
only influences the pressure.

The recently introduced notion \emph{pressure-robustness}
\cite{LM2016}
allows to discriminate between space discretizations
for \eqref{eq:cont:stokes}, whose discrete velocity
solutions $\bv_h$ depend  on $\mathbb{P}(\bm f)$
and not on the entire data $\bm f$.  Such schemes lead to
a priori error estimates for the discrete velocity
that depend only on $\bv$ and not on $(\bv, \frac{1}{\nu} p)$
--- as in nearly all classical mixed finite element methods
\cite{jlmnr:sirev}.

This contribution focuses now on applying the improved understanding of relevant data in the Stokes problem,
in order to derive a priori error estimates
for various discretely inf--sup stable mixed methods in
cases of \emph{minimal regularity}. A special focus is set
on a recent modified pressure-robust mixed method
\cite{LMT15, LLMS2017}, where the modification introduces a consistency
error that can be optimally estimated in a straightforward manner by $C h^k | \Delta \bv |_{H^{k-1}(\Omega)}$
provided that $\bv \in \bH^{k-1}(\Omega)$. For the lowest--order methods $(k=1)$ this requires $\Delta \bv \in \bL^2(\Omega)$.
In situations of \emph{minimal regularity}, i.e.,
$\bv \in \bH^{1+s}(\Omega)$ with $0 < s < 1$,
we provide an estimation of the consistency error
by a more sophisticated argument involving the
Helmholtz--Hodge projector of the data
$\nu^{-1}\mathbb{P}(\bm f)$. This term is obviously in $\bL^2(\Omega)$, whenever it holds $\bm f \in \bL^2(\Omega)$
and it is shown to be equal to $\bbP(- \Delta \bv)$.
Thus,
although it holds in general that
$\Delta \bv \not\in \bL^2(\Omega)$ one can exploit in the numerical analysis that at least the divergence--free part of
$\Delta \bv$ is in $\bL^2(\Omega)$.
{
This observation also leads to a seemingly new estimate
for classical mixed methods, which can be sharper than classical a priori estimates, see Theorem \ref{thm:ConsistencyErrorEstimate_classical}.}
Eventually, all classical conforming
finite element methods yield
an estimate of the form
\begin{align*}
\|\nab (\bv_h-\bS_h(\bv))\|_{L^2(\Omega)} \le C_A  h^s \| \mathbb{P} (-\Delta \bv) \|_{L^2(\Omega)}
+ \frac{C_B h}{\nu} \|\bm f - \bbP (\bm f) \|_{L^2(\Omega)},
\end{align*}
while their pressure-robust siblings allow for estimates of the form
\begin{align*}
\|\nab (\bv_h-\bS_h(\bv))\|_{L^2(\Omega)} \le (C_1 h + C_A h^s) \|\bbP (-\Delta \bv)\|_{L^2(\Omega)},
\end{align*}
with $C_1 > 0$, $C_A > 0$ and $C_B > 0$ are constants that do not
depend on $h$. Note that for divergence-free conforming
methods, see e.g.\ \cite{scott:vogelius:conforming,MR3120580},
it holds $C_1=C_A=C_B=0$, but for them
the only nontrivial part of
the numerical analysis is the proof of the discrete inf-sup
stability.
Further, structurally identical results are obtained
for the classical and a modified pressure-robust
nonconforming Crouzeix--Raviart finite element method.

The rest of this paper is structured as follows. Section~\ref{sec:preliminaries} introduces the Stokes problem
as well as the framework for the modified finite element method
and the assumptions that are crucial for the theoretical results.
Section~\ref{sec:HLP} focusses on the Helmholtz--Hodge projector
and its application in stability estimates.
Section~\ref{sec:stoksprojectors} introduces the continuous and discrete Stokes projectors and their properties.
Section~\ref{sec:estimates_classical} applies the tools of the previous sections to obtain quasi-optimal estimates for classical finite element
methods that only depend on the data. Section \ref{sec:estimates_probust}
does the same for the modified pressure-robust finite element methods
where now the error is additionally independent of the pressure and the inverse of the viscosity $\nu$. Section \ref{sec:estimates_nonconforming} revisits
quasi-optimal and pressure-robust error estimates for the
nonconforming Crouzeix--Raviart finite element method.
Finally we perform some numerical experiments in Section \ref{sec:Numerics}
and compare these empirical results with the theory.

\section{Preliminary results}\label{sec:preliminaries}
This section introduces some notation,
recalls some preliminaries and formulates an assumption that
is fundamental for the presented theory.
We adopt standard space notation and denote
vector--valued functions and vector--valued function spaces
in boldface.  We use 
$(\cdot,\cdot)$ to denote the $L^2$-inner product over $\Omega\subset \bbR^n$,
and by $\langle\cdot,\cdot\rangle$ the duality pairing between some Hilbert space and its dual. 
We denote by $L^2_0(\Omega)$ the Hilbert-space of square-integrable scalar functions with zero average, and
\begin{align*}
\bH({\rm div};\Omega) & = \{\bw\in \bL^2(\Omega):\ \Div \bw\in L^2(\Omega)\},\\
\bH_0({\rm div};\Omega) & = \{\bw\in \bH({\rm div};\Omega):\ \bv\cdot \bn|_{\p \Omega} = 0\},
\end{align*}
where $\bn$ denotes the outward unit normal of $\p\Omega$.

\subsection{Stokes problem and weak elliptic regularity assumption}
In the following, we study finite element methods 
for the model problem:
for $\bm f \in \bm L^2(\Omega)$
seek $(\bv, p) \in \bH^1_0(\Omega) \times L^2_0(\Omega)$ such that it holds
\begin{align}\label{eqn:Stokes_problem}
  - \nu \Delta \bv + \nabla p = \bm f, \quad \text{and} \quad 
  \Div \bv = 0\qquad \text{in }\Omega.
\end{align}
The extension to the more general divergence
constraint $\Div \bv = g$ with $g\in L^2_0(\Omega)$
is straightforward, and we refer to \cite{jlmnr:sirev}
for details.


A weak formulation of the problem is given by:
search for $(\bv, p) \in \bH^1_0(\Omega) \times L^2_0(\Omega)$
such that it holds
\begin{equation} \label{eq:weak:stokes:problem}
\begin{split}
  \nu (\nabla \bv, \nabla \bw) - (p, \Div \bw) & =
    (\bm f, \bw), \\
    (\Div\bv, q) & = 0 
\end{split}
\end{equation}
for all $(\bw, q) \in \bH^1_0(\Omega) \times L^2_0(\Omega)$.

The space of divergence-free $\bH^1_0(\Omega)$ vector fields is denoted as
\begin{equation}
\bV^0 := \{\bw\in \bH^1_0(\Omega):\ \nab \cdot \bw = 0\}.
\end{equation}

\begin{assumption}\label{asmptn:regularity}
Throughout the paper, we assume that the Stokes problem inherits $\bH^{1+s}(\Omega)\times H^s(\Omega)$ elliptic regularity for
some $s \in (0,1]$ and that 
$\nu \| \bv \|_{H^{1+s}(\Omega)} + \|p\|_{H^s(\Omega)}\le C_{\mathrm{ell},s} \| \bm f \|_{L^2(\Omega)}$. 
\end{assumption}

\section{Helmholtz--Hodge projector}\label{sec:HLP}
According to the $\bm L^2$-orthogonal Helmholtz--Hodge decomposition
(see e.g.\ \cite{GR86})
any vector field $\bm f \in \bm L^2(\Omega)$ can be uniquely
decomposed
into 
\begin{align} \label{eqn:HHLdecomposition}
  \bm f = 
  \nabla \alpha + \mathbb{P}(\bm f),
\end{align}
where $\alpha \in H^1(\Omega)/\mathbb{R}$, and
\[
\mathbb{P}(\bm f) 
\in \bm{L}^2_\sigma(\Omega) := \lbrace \bw \in \bL^2(\Omega) : 
(\nabla q, \bw) = 0 \text{ for all } q \in H^1(\Omega) \rbrace
\]
 is the Helmholtz--Hodge projector
of $\bm f$. Note, that the Helmholtz--Hodge projector of ${\bm f}$ is divergence--free and is the
orthogonal $\bL^2$ projection
of $\bm f$ onto $\bm{L}^2_\sigma(\Omega)$,
i.e.,
\begin{align*}
  (\mathbb{P}(\bm f), \bw) = (\bm f,\bw) \qquad \text{for all } \bw \in \bm{L}^2_\sigma(\Omega).
\end{align*}
Moreover, for the Stokes velocity solution $\bv$
it holds
\begin{equation}\label{eqn:weak_solution}
  \nu (\nabla \bv,\nabla \bw) = (\bm f, \bw)  = (\mathbb{P}(\bm f),\bw) \qquad \text{for all } \bw \in \bV^0.
\end{equation}
The
domain of the Helmholtz--Hodge projector
can be extended to $\bH^{-1}(\Omega)$
with range in $(\bV^0)^*$, the space
of bounded linear functionals on $\bV^0$.
Indeed, for every functional
$\bm f \in \bH^{-1}(\Omega)$ the Helmholtz--Hodge projector
can be defined as the \emph{restriction} to $\bV^0$, i.e.,
it holds
\begin{equation} \label{eq:HH:HminusOne}
  <\! \mathbb{P}(\bm f), \bw \!> = \, <\! \bm f, \bw \!>
   \qquad \text{for all } \bw \in \bV^0.
\end{equation}
%
Condition  \eqref{eq:HH:HminusOne} defines \emph{an extension} of
the Helmholtz--Hodge projector from $\bm L^2(\Omega)$
to $\bH^{-1}(\Omega)$. Assume that the functional
$\hat{\bm f} \in \bH^{-1}(\Omega)$ has a 
representation
$\bm f\in \bL^2(\Omega)$ with
$\bm f = \nabla \alpha + \mathbb{P}\bm f$.
Then it holds
for all $\bw \in \bV^0$
\begin{equation*}
  <\! \mathbb{P}(\hat{\bm f}), \bw \!> = <\! \hat{\bm f}, \bw \!> =
  (\bm f, \bw) = (\mathbb{P}(\bm f), \bw).
\end{equation*}

\begin{lemma} \label{lem:delta:v:reg}
Denote by $-\Delta : \bH^1_0(\Omega) \to \bH^{-1}(\Omega)$ via
\begin{equation} \label{eq:laplace:v}
  <\! -\Delta \bw, \bm \psi \!> := (\nabla \bw, \nabla \bm \psi)
  \qquad \text{for all } \bm \psi \in \bH^1_0(\Omega).
\end{equation}
Then the weak velocity solution $\bv$
of  \eqref{eq:weak:stokes:problem} satisfies
\begin{equation}
  \mathbb{P}(-\Delta \bv) = \frac{1}{\nu} \mathbb{P}(\bm f).
\end{equation}
\end{lemma}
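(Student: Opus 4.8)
The plan is to observe that both sides of the claimed identity are elements of the dual space $(\bV^0)^*$, so that to establish equality it suffices to show that they coincide as functionals, i.e.\ that they produce the same value when paired against an arbitrary test function $\bw \in \bV^0$. This reduction is the natural one because the left-hand side $\mathbb{P}(-\Delta\bv)$ is defined only through the extension \eqref{eq:HH:HminusOne}, precisely since $-\Delta\bv \in \bH^{-1}(\Omega)$ need not belong to $\bL^2(\Omega)$ in the minimal-regularity regime, whereas the right-hand side $\tfrac{1}{\nu}\mathbb{P}(\bm f)$ is a genuine element of $\bm{L}^2_\sigma(\Omega)\subset(\bV^0)^*$.

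First I would fix $\bw \in \bV^0$ and evaluate the pairing of the left-hand side. Applying the defining property \eqref{eq:HH:HminusOne} of the extended Helmholtz--Hodge projector to the functional $-\Delta\bv \in \bH^{-1}(\Omega)$, and then the definition \eqref{eq:laplace:v} of the weak vector Laplacian with $\bm\psi=\bw$, yields
\begin{equation*}
  <\! \mathbb{P}(-\Delta\bv), \bw \!> \; = \; <\! -\Delta\bv, \bw \!> \; = \; (\nabla\bv, \nabla\bw).
\end{equation*}
Next I would invoke the weak-solution characterization \eqref{eqn:weak_solution}, namely $\nu(\nabla\bv,\nabla\bw) = (\mathbb{P}(\bm f),\bw)$ for all $\bw \in \bV^0$ (in which the pressure term has already dropped out since $\Div\bw=0$). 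Dividing by $\nu$ and using that for the $\bL^2$-field $\tfrac{1}{\nu}\mathbb{P}(\bm f)$ the duality pairing on $\bV^0$ restricts to the $L^2$-inner product, I obtain
\begin{equation*}
  <\! \mathbb{P}(-\Delta\bv), \bw \!> \; = \; (\nabla\bv,\nabla\bw) \; = \; \tfrac{1}{\nu}(\mathbb{P}(\bm f),\bw) \; = \; <\! \tfrac{1}{\nu}\mathbb{P}(\bm f), \bw \!>.
\end{equation*}
Since $\bw \in \bV^0$ was arbitrary, the two functionals agree on all of $\bV^0$, which is exactly the asserted identity.

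I expect no serious obstacle, as the argument is a short chain of definitions. The only point deserving care is conceptual rather than technical: $-\Delta\bv$ must be read as an element of $\bH^{-1}(\Omega)$ and its projector understood through the extension \eqref{eq:HH:HminusOne}, because under $\bv\in\bH^{1+s}(\Omega)$ with $s<1$ the quantity $\Delta\bv$ need not be square-integrable. The substance of the lemma is precisely that, despite this, $\mathbb{P}(-\Delta\bv)$ is well-defined and coincides with the genuinely square-integrable object $\tfrac{1}{\nu}\mathbb{P}(\bm f)$; the compatibility of the $L^2$-pairing with the duality pairing on $\bV^0$ for the right-hand side is immediate from the inclusion $\bm{L}^2_\sigma(\Omega)\subset(\bV^0)^*$.
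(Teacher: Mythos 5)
Your proof is correct and follows exactly the route the paper intends: its one-line proof ("a combination of \eqref{eqn:weak_solution} and \eqref{eq:HH:HminusOne}") is precisely the chain you spell out, namely $<\!\mathbb{P}(-\Delta\bv),\bw\!> = (\nabla\bv,\nabla\bw) = \nu^{-1}(\mathbb{P}(\bm f),\bw)$ for all $\bw\in\bV^0$. Your added remark that the identity is an equality in $(\bV^0)^*$, with the right-hand side the $\bL^2$-representative via $\bm{L}^2_\sigma(\Omega)\subset(\bV^0)^*$, is a faithful elaboration of the same argument, not a different one.
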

\begin{proof}
This follows directly from a combination of \eqref{eqn:weak_solution}
and \eqref{eq:HH:HminusOne}.
\end{proof}

Thus, although the regularity of the functional $-\Delta \bv$
is not better in general than $-\Delta \bv \in \bH^{-1}(\Omega)$, its
divergence--free part $\mathbb{P}(-\Delta \bv)$ has the
better regularity $\bL^2(\Omega)$.

\begin{remark}
We emphasize that Lemma \ref{lem:delta:v:reg} is of
central importance for the derivation
of \emph{pressure-robust} a priori error estimates
in case of minimal regularity.
We also stress that the quantity $\nu^{-1} \mathbb{P}(\bm f)$,
which appears naturally in the analysis of pressure-robust
methods,
does in fact \textit{not} scale with the inverse of $\nu$,
since it only depends on $\bv$.
\end{remark}

An immediate consequence from Lemma
\ref{lem:delta:v:reg}
is the following result that bounds the norm of the
velocity field
by the norm of the Helmholtz--Hodge projector
of the data $\bm f$. 
\begin{lemma}[Continuous stability estimate]
The exact solution of problem ~\eqref{eqn:Stokes_problem} satisfies
$$
  \|\nab \bv \|_{L^2(\Omega)} \leq \frac{C_{PF}}{\nu} \| \mathbb{P}(\bm f) \|_{L^2(\Omega)} = C_{PF}\| \mathbb{P} (\Delta \bv) \|_{L^2(\Omega)}
$$
where $C_{PF}$ is the constant from the Poincar\'{e}--Friedrichs inequality.
\end{lemma}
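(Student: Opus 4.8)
The plan is to derive the estimate directly from the divergence--free weak formulation \eqref{eqn:weak_solution}, exploiting that the exact velocity $\bv$ itself lies in the test space $\bV^0$. The crucial structural observation --- and the source of the pressure--robustness of the bound --- is that on $\bV^0$ the data $\bm f$ may be replaced by its Helmholtz--Hodge projector $\mathbb{P}(\bm f)$, since the gradient part $\nabla \alpha$ of the decomposition \eqref{eqn:HHLdecomposition} is $\bL^2$-orthogonal to every divergence--free field. This is exactly the content already recorded in \eqref{eqn:weak_solution}.

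First I would choose the test function $\bw = \bv$ in \eqref{eqn:weak_solution}, which is admissible because $\bv \in \bV^0$. This yields $\nu \|\nab \bv\|_{L^2(\Omega)}^2 = (\mathbb{P}(\bm f), \bv)$. Next I would apply the Cauchy--Schwarz inequality to the right--hand side, followed by the Poincar\'e--Friedrichs inequality $\|\bv\|_{L^2(\Omega)} \le C_{PF} \|\nab \bv\|_{L^2(\Omega)}$, obtaining
$$\nu \|\nab \bv\|_{L^2(\Omega)}^2 \le C_{PF} \|\mathbb{P}(\bm f)\|_{L^2(\Omega)} \|\nab \bv\|_{L^2(\Omega)}.$$
Dividing through by $\|\nab \bv\|_{L^2(\Omega)}$ (the case $\nab \bv = \bm 0$ being trivial) and by $\nu$ then produces the claimed inequality $\|\nab \bv\|_{L^2(\Omega)} \le (C_{PF}/\nu) \|\mathbb{P}(\bm f)\|_{L^2(\Omega)}$.

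Finally, the asserted equality with $C_{PF}\|\mathbb{P}(\Delta \bv)\|_{L^2(\Omega)}$ follows immediately from Lemma \ref{lem:delta:v:reg}, which gives $\mathbb{P}(-\Delta \bv) = \nu^{-1}\mathbb{P}(\bm f)$ and hence $\nu^{-1}\|\mathbb{P}(\bm f)\|_{L^2(\Omega)} = \|\mathbb{P}(\Delta \bv)\|_{L^2(\Omega)}$ by linearity of $\mathbb{P}$. I do not anticipate a genuine obstacle here: the argument is a one--line energy estimate, and the only point requiring care is the justification --- already supplied in Section~\ref{sec:HLP} --- that testing against the divergence--free field $\bv$ sees only $\mathbb{P}(\bm f)$ and not the full datum $\bm f$, which is precisely what keeps the bound independent of the pressure and of the gradient part of the data.
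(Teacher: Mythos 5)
Your proof is correct and follows essentially the same route as the paper's: the paper likewise tests \eqref{eqn:weak_solution} with $\bw = \bv$ and applies the Poincar\'e--Friedrichs inequality, with the equality to $C_{PF}\|\mathbb{P}(\Delta\bv)\|_{L^2(\Omega)}$ supplied by Lemma~\ref{lem:delta:v:reg} exactly as you argue. Your write-up merely spells out the Cauchy--Schwarz step and the division by $\|\nab\bv\|_{L^2(\Omega)}$, which the paper leaves implicit.
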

\begin{proof}
The result follows directly from testing \eqref{eqn:weak_solution}
with $\bw = \bv$ and using the
Poincar\'{e}--Friedrichs inequality.\hfill
\end{proof}
\begin{remark}
Here, we emphasize that
the right hand side of the stability estimate is
given by a \emph{semi-norm of the data $\bm f$}.
This is a crucial point, which arguably has not been fully exploited
in  classical mixed theory \cite{MR3097958, GR86}.
\end{remark}

\section{Notation and setting for conforming finite element methods}
In the following, we introduce some notation for
the finite element methods used in this contribution.
We denote by $\bX_h\times Q_h$,
a discretely inf-sup stable finite element pair
\cite{MR3097958} for the Stokes problem
with homogeneous Dirichlet boundary conditions
{with respect to a conforming, shape--regular and simplicial triangulation $\mct$ with $h = \max_{T\in \mct} {\rm diam}(T)$.}
The $L^2$ best approximation onto the discrete pressure space
$L^2_0(\Omega)$ is denoted by $\pi_h: L^2_0(\Omega) \to Q_h$, i.e., for all $r \in L^2_0(\Omega)$
it holds
\begin{equation}\label{eqn:L2ProjDef}
  (\pi_h r, q_h) = (r, q_h) \qquad \text{for all $q_h \in Q_h$.}
\end{equation}
{We assume that $Q_h$
has the approximation property
\begin{equation}\label{eqn:L2ProjApprox}
\|r-\pi_h r\|_{L^2(\Omega)} = \inf_{q_h\in Q_h} \|r-q_h\|_{L^2(\Omega)}\le C_{\pi_h, s} h^{s}\|r\|_{H^{s}(\Omega)}
\end{equation}
for all $r\in H^{s}(\Omega)\cap L^2_0(\Omega)$ and $s\in (0,1]$.
}

Let $\mathrm{div}_h:\bX_h \to Q_h$
with $\mathrm{div}_h = \pi_h \Div$ 
denote
the discrete divergence operator.
Due to the assumed discrete inf--sup stability of the pair $\bX_h\times Q_h$,
$\mathrm{div}_h$ is surjective with bounded right--inverse \cite{MR3097958}.
%
%
We define
the space of
discretely divergence--free functions as
\begin{equation*}
\bV^0_h  :=\{\bv_h\in \bX_h : \mathrm{div}_h \bv_h = 0 \}.
\end{equation*}


\subsection{Some modified finite element methods}
As shown in  \cite{Lin14, LMT15, LLMS2017},
a certain modification of the discrete right--hand side of 
the incompressible Stokes problem  renders
 inf-sup stable mixed   methods
{pressure-robust}.
These {pressure-robust} finite element methods
employ a reconstruction operator with the properties stated in the following assumption.

\begin{assumption}\label{asmptn:reconstructionoperator}
We assume that there exists 
an auxiliary finite element space $\bY_h\subset \bH_0({\rm div};\Omega)$
and a  reconstruction operator $\bI_h:\bH^1_0(\Omega)\to \bY_h$ such that
\begin{align}\label{eqn:IhApprox_divfree}
& (i) & 
\Div (\bI_h \bv_h) & = \mathrm{div}_h \bv_h & \text{for all } \bv_h\in \bX_h,\\
& (ii) & \|\bv_h - \bI_h \bv_h\|_{L^2(\Omega)} &\le C_1 h \|\nab \bv_h\|_{L^2(\Omega)} & \text{for all } \bv_h\in \bX_h,\label{eqn:IhApprox}
\end{align}
where $C_1$ depends only on the shape regularity of the mesh.
\end{assumption}

The modified finite element method for the Stokes problem applies 
the reconstruction operator in the right-hand side.
The resulting scheme  seeks $(\bv_h,p_h)\in \bX_h\times Q_h$ such that
\begin{alignat}{2}
\nu (\nab \bv_h,\nab \bw_h) - (\mathrm{div}_h \bw_h,p_h) & = ({\bm f},\bI_h \bw_h)\qquad &&\text{for all } \bw_h\in \bX_h,\label{eqn:discrete_problem}\\
(\mathrm{div}_h \bv_h,q_h) & = 0 \qquad &&\text{for all } q_h\in Q_h.\nonumber
\end{alignat}

Testing \eqref{eqn:discrete_problem} with
discretely divergence-free velocity test functions yields
\begin{equation}
  \label{eqn:weak_solution_discrete}
  \nu  (\nab \bv_h,\nab \bw_h) = (\bm f, \bI_h \bw)  = (\mathbb{P}(\bm f), \bI_h \bw_h) \qquad  \text{for all } \bw_h \in \bV^0_h,
\end{equation}
since for $\bw_h \in \bV^0_h$ it holds $\bI_h \bw_h \in \bm{L}^2_\sigma(\Omega)$.
This last identity is characteristic for pressure-robustness
and in general \emph{not} true for non-divergence-free classical finite element methods. It tells us that the discrete velocity solution
$\bv_h$ of \eqref{eqn:discrete_problem} depends on the appropriate continuous data
$\nu^{-1} \mathbb{P}(\bm f)$ of the problem.

In the case of discontinuous pressure spaces $Q_h$, the
standard interpolation operators of the 
Raviart-Thomas or Brezzi--Douglas--Marini finite element spaces can be employed as a reconstruction operator $\bI_h$,
see \cite{jlmnr:sirev,LM2016, LMT15} for details.
For instance, in the case of the Bernardi--Raugel finite element method \cite{BernardiRaugel}, the standard interpolator into the BDM space of order one can be used. 
For continuous pressure spaces, the design of the reconstruction operator is more involved; see \cite{LLMS2017}
for details in case of the Taylor--Hood or MINI finite element family.

\begin{remark}
Note, that for $\bI_h = \bm 1$ (the identity operator) in \eqref{eqn:discrete_problem} the classical finite element method is obtained. However, 
only divergence-free $H^1$-conforming classical finite element methods, see e.g.\ \cite{scott:vogelius:conforming,MR3120580}, satisfy Assumption~\ref{asmptn:reconstructionoperator}
with $C_1 = 0$. In the results below it will be
specified which results rely on this assumption.
\end{remark}

\begin{lemma}[Discrete stability estimates]
Let $(\bv_h,p_h)\in \bX_h\times Q_h$ satisfy \eqref{eqn:discrete_problem}
and write $\bm f = \nab \alpha+\mathbb{P}\bm f$.
Then if the discrete scheme satisfies Assumption \ref{asmptn:reconstructionoperator}, it holds the estimate
$$
  \|\nab \bv_h \|_{L^2(\Omega)} \leq  (C_{PF} + C_1h) \| \nu^{-1} \mathbb{P}(\bm f) \|_{L^2(\Omega)}
  = (C_{PF} + C_1h) \| \mathbb{P} (\Delta \bv) \|_{L^2(\Omega)}.
$$
If the discrete scheme with $\bI_h = \bm 1$ does not satisfy Assumption \ref{asmptn:reconstructionoperator}, it only holds
\begin{align*}
\|\nab \bv_h \|_{L^2(\Omega)} 
&\leq C_{PF} \| \nu^{-1} \mathbb{P}(\bm f) \|_{L^2(\Omega)}
+ \frac{1}{\nu} \| \alpha - \pi_h \alpha \|_{L^2(\Omega)}\\
&\leq C_{PF} \| \mathbb{P} (\Delta \bv) \|_{L^2(\Omega)}
+ \frac{C h}{\nu} \| \bm f - \mathbb{P}(\bm f) \|_{L^2(\Omega)}.
\end{align*}
\end{lemma}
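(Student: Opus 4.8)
The plan is to test the discrete momentum balance \eqref{eqn:discrete_problem} with the discretely divergence-free function $\bw_h=\bv_h\in\bV^0_h$; since $\bv_h$ is discretely divergence-free the pressure term $(\mathrm{div}_h\bv_h,p_h)$ vanishes, and the whole estimate reduces to controlling the action of the data functional on $\bv_h$. I would then handle the two cases separately: the first through the pressure-robust identity \eqref{eqn:weak_solution_discrete}, and the second through a direct Helmholtz--Hodge decomposition of $\bm f$.

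For the pressure-robust case I would start from \eqref{eqn:weak_solution_discrete} with $\bw_h=\bv_h$, which gives $\nu\|\nab\bv_h\|_{L^2(\Omega)}^2=(\mathbb{P}(\bm f),\bI_h\bv_h)$. I would split $\bI_h\bv_h=\bv_h+(\bI_h\bv_h-\bv_h)$ and apply Cauchy--Schwarz to each piece: the first term is bounded by $C_{PF}\|\mathbb{P}(\bm f)\|_{L^2(\Omega)}\|\nab\bv_h\|_{L^2(\Omega)}$ via Poincar\'e--Friedrichs, and the second by $C_1 h\|\mathbb{P}(\bm f)\|_{L^2(\Omega)}\|\nab\bv_h\|_{L^2(\Omega)}$ via the approximation property \eqref{eqn:IhApprox}. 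Dividing by $\nu\|\nab\bv_h\|_{L^2(\Omega)}$ yields the first estimate, and the stated equality follows from Lemma~\ref{lem:delta:v:reg}.

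For the classical case $\bI_h=\bm 1$ the same test function gives $\nu\|\nab\bv_h\|_{L^2(\Omega)}^2=(\bm f,\bv_h)$, into which I would insert the decomposition $\bm f=\nab\alpha+\mathbb{P}(\bm f)$ with the zero-mean representative $\alpha\in H^1(\Omega)\cap L^2_0(\Omega)$. The divergence-free part contributes $C_{PF}\|\mathbb{P}(\bm f)\|_{L^2(\Omega)}\|\nab\bv_h\|_{L^2(\Omega)}$ exactly as before. The gradient part $(\nab\alpha,\bv_h)$ is the crux: integrating by parts (the boundary term vanishing because $\bv_h\in\bH^1_0(\Omega)$) rewrites it as $-(\alpha,\Div\bv_h)$, and since $\bv_h$ is only \emph{discretely} divergence-free I would exploit $(\Div\bv_h,q_h)=0$ for all $q_h\in Q_h$ to subtract $\pi_h\alpha$, obtaining $-(\alpha-\pi_h\alpha,\Div\bv_h)$. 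Cauchy--Schwarz together with the elementary bound $\|\Div\bv_h\|_{L^2(\Omega)}\le\|\nab\bv_h\|_{L^2(\Omega)}$ for $\bH^1_0(\Omega)$ fields then produces the first displayed line after dividing by $\nu\|\nab\bv_h\|_{L^2(\Omega)}$. For the second line I would apply the approximation property \eqref{eqn:L2ProjApprox} with $s=1$ to $\alpha$ and use the Poincar\'e inequality for the zero-mean $\alpha$ to replace $\|\alpha\|_{H^1(\Omega)}$ by $\|\nab\alpha\|_{L^2(\Omega)}=\|\bm f-\mathbb{P}(\bm f)\|_{L^2(\Omega)}$.

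The main obstacle is the treatment of the gradient contribution $(\nab\alpha,\bv_h)$ in the classical case: were $\bv_h$ exactly divergence-free this term would vanish identically, but for a generic inf--sup stable pair it does not, and the point is that discrete divergence-freeness still permits inserting $\pi_h\alpha$ and thereby converting an otherwise $O(1)$ pressure-dependent term into the $O(h)$ consistency contribution $\nu^{-1}\|\alpha-\pi_h\alpha\|_{L^2(\Omega)}$. In the pressure-robust scheme this difficulty disappears entirely, since properties (i)--(ii) of the reconstruction guarantee $\bI_h\bv_h\in\bm{L}^2_\sigma(\Omega)$, so that $\bm f$ may be replaced by $\mathbb{P}(\bm f)$ from the outset.
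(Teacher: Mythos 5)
Your proposal is correct and takes essentially the same route as the paper's proof: test with $\bw_h=\bv_h$, split $\bI_h\bv_h=\bv_h+(\bI_h\bv_h-\bv_h)$ and use Poincar\'e--Friedrichs with \eqref{eqn:IhApprox} in the pressure-robust case, and in the classical case insert the Helmholtz--Hodge decomposition, integrate by parts, exploit discrete divergence-freeness to replace $\alpha$ by $\alpha-\pi_h\alpha$, and conclude with \eqref{eqn:L2ProjApprox}. Your explicit choices (the zero-mean representative of $\alpha$ and the bound $\|\Div \bv_h\|_{L^2(\Omega)}\le\|\nab \bv_h\|_{L^2(\Omega)}$ for $\bH^1_0(\Omega)$ fields) merely spell out steps the paper leaves implicit.
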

\begin{proof}
Testing \eqref{eqn:weak_solution_discrete} with $\bw_h = \bv_h$, a discrete Poincar\'e--Friedrichs inequality and \eqref{eqn:IhApprox} yield
\begin{align*}
\nu \|\nab \bv_h \|_{L^2(\Omega)}^2
= (\mathbb{P}(\bm f), \bI_h \bv_h )
& \leq \| \mathbb{P}(\bm f) \|_{L^2(\Omega)}
\left( \| \bv_h \|_{L^2(\Omega)} + \| \bv_h - \bI_h \bv_h\|_{L^2(\Omega)} \right)\\
& \leq (C_{PF} + C_1h) \| \mathbb{P}(\bm f) \|_{L^2(\Omega)} \|\nab \bv_h \|_{L^2(\Omega)}.
\end{align*}
If $\bI_h = \bm 1$ and Assumption \ref{asmptn:reconstructionoperator} is not satisfied then inserting the Helmholtz--Hodge decomposition of $\bm f$
and an integration by parts give
\begin{align*}
\nu \|\nab \bv_h \|_{L^2(\Omega)}^2
& = (\mathbb{P}(\bm f), \bv_h ) + (\nabla \alpha, \bv_h)\\
& = (\mathbb{P}(\bm f), \bv_h )
 -(\alpha, \Div \bv) \\
& = (\mathbb{P}(\bm f), \bv_h )
 -(\alpha - \pi_h \alpha, \Div \bv) \\
& \leq \left(C_{PF} \| \mathbb{P}(\bm f) \|_{L^2(\Omega)}
+ \| \alpha - \pi_h \alpha \|_{L^2(\Omega)}
 \right) \| \nabla \bv_h \|_{L^2(\Omega)}.
\end{align*}
Property \eqref{eqn:L2ProjApprox}
shows $\| \alpha - \pi_h \alpha \|_{L^2(\Omega)} \leq C h \| \nab \alpha \|_{L^2(\Omega)} = C h \| \bm f - \mathbb{P}(\bm f) \|_{L^2(\Omega)}$.
This concludes the proof.
\end{proof}

\section{Continuous and discrete Stokes projectors}\label{sec:stoksprojectors}
In preparation for the a priori error estimates,
this section studies the continuous and the discrete Stokes projectors. They are defined as the $\bH^1$-seminorm 
best-approximations into the (discretely) divergence-free functions, i.e.\ $\bS_h:\bH^1_0(\Omega)\to \bV^0_h$
and $\bS:\bX_h\to \bV^0$ are defined by
\begin{alignat}{2}
\label{eqn:DefSh}
(\nab \bS_h(\bv),\nab \bw_h) &= (\nab \bv,\nab \bw_h)\qquad &&\forall \bw_h\in \bV^0_h,\\
\label{eqn:DefS}
(\nab \bS(\bv_h),\nab \bw) & = (\nab \bv_h,\nab \bw)\qquad &&\forall \bw\in \bV^0.
\end{alignat}
The rest of this section collects useful properties of these projectors.

\begin{lemma}[Stokes projector identity]\label{lem:StokesProjectoridentity}
For any $\bv \in \bH^1_0(\Omega)$ and $\bv_h \in \bX_h$, it holds the identity
\[
(\nab \bS_h(\bv),\nab \bw_h) = (\nab \bv,\nab \bS(\bw_h))\qquad \forall \bv\in \bV^0,\ \bw_h\in \bV^0_h.
\]
\end{lemma}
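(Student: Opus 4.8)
The plan is to prove the identity
\[
(\nab \bS_h(\bv),\nab \bw_h) = (\nab \bv,\nab \bS(\bw_h))
\]
by chaining together the two defining relations \eqref{eqn:DefSh} and \eqref{eqn:DefS} of the discrete and continuous Stokes projectors, exploiting the fact that $\bS_h(\bv) \in \bV^0_h$ and $\bS(\bw_h) \in \bV^0$ are \emph{themselves} admissible test functions in the respective definitions. First I would observe that since $\bw_h \in \bV^0_h$, I may use it as the test function in the definition \eqref{eqn:DefSh} of $\bS_h(\bv)$, which gives $(\nab \bS_h(\bv),\nab \bw_h) = (\nab \bv,\nab \bw_h)$.

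Next, I would note that $\bv \in \bV^0$ by hypothesis, so $\bv$ is a valid test function in the definition \eqref{eqn:DefS} of $\bS(\bw_h)$. This yields $(\nab \bS(\bw_h),\nab \bv) = (\nab \bw_h,\nab \bv)$. Since the $L^2$-inner product of gradients is symmetric, the two right-hand sides $(\nab \bv,\nab \bw_h)$ and $(\nab \bw_h,\nab \bv)$ coincide, and therefore
\[
(\nab \bS_h(\bv),\nab \bw_h) = (\nab \bv,\nab \bw_h) = (\nab \bw_h,\nab \bv) = (\nab \bS(\bw_h),\nab \bv) = (\nab \bv,\nab \bS(\bw_h)),
\]
which is exactly the claimed identity.

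In truth there is no serious obstacle here: the result is a direct "Galerkin orthogonality" style manipulation, and the only thing to be careful about is matching the membership hypotheses to the quantifiers in the two definitions. Specifically, one must check that $\bw_h \in \bV^0_h$ legitimizes its use in \eqref{eqn:DefSh} and that $\bv \in \bV^0$ legitimizes its use in \eqref{eqn:DefS}; both are guaranteed by the statement's own hypotheses $\bv \in \bV^0$ and $\bw_h \in \bV^0_h$. The symmetry of the bilinear form $(\nab \cdot, \nab \cdot)$ then closes the argument immediately, so the proof is essentially a one-line double application of the projector definitions.
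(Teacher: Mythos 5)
Your proof is correct and takes exactly the paper's route: the paper's proof consists of the single remark that the identity ``follows directly from the combination of the definitions of $\bS_h$ and $\bS$,'' and your argument---testing \eqref{eqn:DefSh} with $\bw_h\in \bV^0_h$, testing \eqref{eqn:DefS} with $\bv\in \bV^0$, and invoking the symmetry of $(\nab\cdot,\nab\cdot)$---is precisely that combination written out in full. Your care in matching the membership hypotheses $\bv\in\bV^0$ and $\bw_h\in\bV^0_h$ to the quantifiers in the two definitions is the only nontrivial point, and you handled it correctly.
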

\begin{proof}
  This follows directly from the combination of the definitions of $\bS_h$ and $\bS$.
\end{proof}


\begin{lemma}\label{lem:Duality}
Suppose that the Stokes problem
satisfies Assumption~\ref{asmptn:regularity}.
Then there holds
\begin{align}\label{eqn:DualityEstimate}
\|\bw_h- \bS(\bw_h)\|_{L^2(\Omega)}\le C_2 h^s\|\nab \cdot \bw_h\|_{L^2(\Omega)}\qquad \forall \bw_h\in \bV^0_h.
\end{align}
\end{lemma}
\begin{proof}
Let $(\bpsi,r)\in \bH^1_0(\Omega)\times L^2_0(\Omega)$ solve
the Stokes problem with source $\bw_h - \bS(\bw_h)$ and unit viscosity:
\begin{alignat*}{2}
(\nab \bpsi,\nab \bz) - (\nab \cdot \bz,r) & = (\bw_h -\bS(\bw_h),\bz)\qquad &&\forall \bz\in \bH^1_0(\Omega),\\
(\nab \cdot \bpsi,q) & = 0\qquad &&\forall q\in L_0^2(\Omega).
\end{alignat*}
Testing the first equation with $\bz = \bw_h - \bS(\bw_h)$
and employing \eqref{eqn:DefS} leads to
\begin{align*}
\|\bw_h - \bS(\bw_h)\|_{L^2(\Omega)}^2 
&= (\nab \bpsi,\nab (\bw_h-\bS(\bw_h))) - (\nab \cdot (\bw_h-\bS(\bw_h)),r)\\
&= - (\nab \cdot \bw_h,r).
\end{align*}
%
Recall that $\pi_h r$ is the $L^2$--projection of $r$ defined by \eqref{eqn:L2ProjDef},
and note that
 it holds $(\nab \cdot \bw_h,\pi_h{r}_h)=0$
since $\bw_h\in \bV^0_h$.  Consequently, by \eqref{eqn:L2ProjApprox}, we have
%
\begin{align*}
\|\bw_h - \bS(\bw_h)\|_{L^2(\Omega)}^2 
&= -(\nab \cdot \bw_h,r-\pi_h{r}) \le \|\nab \cdot \bw_h\|_{L^2(\Omega)} \|r-\pi_h{r}\|_{L^2(\Omega)}\\
&\le C_{\pi_h,s} h^s \|\nab \cdot \bw_h\|_{L^2(\Omega)} \|r\|_{H^s(\Omega)}.
\end{align*}
Finally, the elliptic regularity Assumption~\ref{asmptn:regularity} implies
$\|r\|_{H^s(\Omega)}\le C_{\mathrm{ell},s} \|\bw_h-\bS(\bw_h)\|_{L^2(\Omega)}$, and so
\begin{align*}
\|\bw_h - \bS(\bw_h)\|_{L^2(\Omega)}^2 
&\le C_{\mathrm{ell},s} C_{\pi_h, s} h^s \|\nab \cdot \bw_h\|_{L^2(\Omega)} \|\bw_h-\bS(\bw_h)\|_{L^2(\Omega)}.
\end{align*}
Dividing the last inequality by $ \|\bw_h-\bS(\bw_h)\|_{L^2(\Omega)}$ gets the desired result.
\end{proof}


\section{Quasi-optimal a priori error estimates for classical
finite element methods}\label{sec:estimates_classical}
This section derives a priori error estimates
for classical finite element methods that are not pressure-robust, i.e.\
do not satisfy Assumption~\ref{asmptn:reconstructionoperator} with $\bI_h = \bm 1$ like
the Bernardi--Raugel, MINI or Taylor--Hood finite element methods. The proof of the
estimate bounds the error of the best-approximation
by the right-hand side data.

\begin{theorem}\label{thm:ConsistencyErrorEstimate_classical}
Suppose that the Stokes problem satisfies Assumption~\ref{asmptn:regularity}, the reconstruction operator
is taken to be the identity $\bI_h = \bm 1$, and that $\bI_h$ does not satisfy 
Assumption~\ref{asmptn:reconstructionoperator}.
Then there holds
\begin{align*}
\|\nab (\bv_h-\bS_h(\bv))\|_{L^2(\Omega)} &
 \le  C_2  h^s \| \nu^{-1} \mathbb{P} (\bm f) \|_{L^2(\Omega)}
+ \frac{C_{\pi_h,1} h}{\nu}  \|\bm f - \mathbb{P}(\bm f)\|_{L^2(\Omega)} \\
 & = C_2  h^s \| \mathbb{P} (-\Delta \bv) \|_{L^2(\Omega)}
+ \frac{C_{\pi_h,1} h}{\nu}  \|\bm f - \mathbb{P}(\bm f)\|_{L^2(\Omega)}
\end{align*}
with $C_2>0$ given by \eqref{eqn:DualityEstimate}.
\end{theorem}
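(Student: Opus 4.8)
The plan is to set up a Galerkin-type error equation for $\bv_h - \bS_h(\bv)$ and then test it against itself, splitting the resulting consistency term according to the Helmholtz--Hodge decomposition so that the divergence-free part is measured by $\mathbb{P}(\bm f)$ alone. Writing $\be_h := \bv_h - \bS_h(\bv) \in \bV^0_h$, I first note that with $\bI_h = \bm 1$ testing \eqref{eqn:discrete_problem} against $\bw_h \in \bV^0_h$ gives $\nu(\nab \bv_h, \nab \bw_h) = (\bm f, \bw_h)$, while the definition \eqref{eqn:DefSh} of the discrete Stokes projector gives $\nu(\nab \bS_h(\bv), \nab \bw_h) = \nu(\nab \bv, \nab \bw_h)$. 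Subtracting yields, for every $\bw_h \in \bV^0_h$,
\[
\nu(\nab \be_h, \nab \bw_h) = (\bm f, \bw_h) - \nu(\nab \bv, \nab \bw_h).
\]

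The key step is to rewrite the right-hand side using $\bm f = \nab \alpha + \mathbb{P}(\bm f)$ from \eqref{eqn:HHLdecomposition}, handling the two summands with different tools. For the gradient part I integrate by parts (legitimate since $\bw_h \in \bH^1_0(\Omega)$) and use that $\bw_h \in \bV^0_h$ is discretely divergence-free, so $(\Div \bw_h, q_h) = 0$ for all $q_h \in Q_h$; inserting $\pi_h \alpha$ then gives $(\nab \alpha, \bw_h) = -(\alpha - \pi_h \alpha, \Div \bw_h)$. For the solenoidal part the obstacle is that $\bw_h$ is \emph{not} exactly divergence-free, so I introduce the continuous Stokes projector $\bS(\bw_h) \in \bV^0$ and split $(\mathbb{P}(\bm f), \bw_h) = (\mathbb{P}(\bm f), \bw_h - \bS(\bw_h)) + (\mathbb{P}(\bm f), \bS(\bw_h))$. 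Since $\bS(\bw_h) \in \bV^0$, identity \eqref{eqn:weak_solution} gives $(\mathbb{P}(\bm f), \bS(\bw_h)) = \nu(\nab \bv, \nab \bS(\bw_h))$, and the Stokes projector identity of Lemma~\ref{lem:StokesProjectoridentity} together with the definition \eqref{eqn:DefSh} of $\bS_h$ turns this into $\nu(\nab \bv, \nab \bw_h)$, which cancels exactly against the $-\nu(\nab \bv, \nab \bw_h)$ term. This produces the clean consistency identity
\[
\nu(\nab \be_h, \nab \bw_h) = -(\alpha - \pi_h \alpha, \Div \bw_h) + (\mathbb{P}(\bm f), \bw_h - \bS(\bw_h)),
\]
in which only $\mathbb{P}(\bm f)$, and not the full datum $\bm f$, is paired against the solenoidal defect $\bw_h - \bS(\bw_h)$.

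Finally I set $\bw_h = \be_h$ and estimate each term. The duality estimate of Lemma~\ref{lem:Duality} controls $\|\be_h - \bS(\be_h)\|_{L^2(\Omega)} \le C_2 h^s \|\Div \be_h\|_{L^2(\Omega)}$, giving the $h^s\|\mathbb{P}(\bm f)\|$ contribution, while the approximation property \eqref{eqn:L2ProjApprox} with $s = 1$ gives $\|\alpha - \pi_h \alpha\|_{L^2(\Omega)} \le C_{\pi_h,1} h \|\nab \alpha\|_{L^2(\Omega)} = C_{\pi_h,1} h \|\bm f - \mathbb{P}(\bm f)\|_{L^2(\Omega)}$. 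Bounding $\|\Div \be_h\|_{L^2(\Omega)}$ by $\|\nab \be_h\|_{L^2(\Omega)}$, dividing through by $\nu \|\nab \be_h\|_{L^2(\Omega)}$, and rewriting $\nu^{-1}\mathbb{P}(\bm f) = \mathbb{P}(-\Delta \bv)$ via Lemma~\ref{lem:delta:v:reg} yields the claimed bound. I expect the genuine difficulty to lie entirely in the second paragraph: recognizing that the continuous Stokes projector $\bS(\bw_h)$ is the right device to isolate $\mathbb{P}(\bm f)$ and force the $\nu(\nab \bv, \nab \bw_h)$ term to cancel, so that the estimate depends only on the pressure-robust quantity $\|\mathbb{P}(-\Delta \bv)\|_{L^2(\Omega)}$ together with a genuinely higher-order gradient-part remainder, rather than on $\|\bm f\|_{L^2(\Omega)}$ as in a naive consistency argument.
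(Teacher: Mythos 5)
Your proposal is correct and takes essentially the same route as the paper's own proof: the paper telescopes $\|\nab \be_h\|_{L^2(\Omega)}^2$ via Lemma~\ref{lem:StokesProjectoridentity} into $\nu^{-1}(\bm f,\be_h-\bS(\be_h))$ and then inserts the Helmholtz--Hodge decomposition, which (since $\Div \bS(\be_h)=0$) is exactly your consistency identity specialized to $\bw_h=\be_h$. All remaining ingredients --- inserting $\pi_h\alpha$ via the discrete divergence-free property, the duality estimate of Lemma~\ref{lem:Duality}, the approximation property \eqref{eqn:L2ProjApprox}, and the identification $\nu^{-1}\mathbb{P}(\bm f)=\mathbb{P}(-\Delta \bv)$ from Lemma~\ref{lem:delta:v:reg} --- coincide with the paper's.
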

\begin{proof}
Write $\be_h := \bv_h-\bS_h(\bv)$ and note that $\be_h\in \bV^0_h$. Hence,
it follows from Lemmas~\ref{lem:StokesProjectoridentity} and  \ref{lem:Duality} that
\begin{align}
\|\nab \be_h\|_{L^2(\Omega)}^2  
&= (\nab \bv_h,\nab \be_h) - (\nab \bS_h(\bv),\nab \be_h) \nonumber \\
&= (\nab \bv_h,\nab \be_h) - (\nab \bv,\nab \bS(\be_h)) \nonumber \\
& = \nu^{-1} (\bm f,\be_h-\bS(\be_h)) \nonumber \\
& = \nu^{-1} (\mathbb{P}(\bm f) + \nabla \alpha,\be_h-\bS(\be_h)) \nonumber \\
& = (\mathbb{P} (-\Delta \bv),\be_h-\bS(\be_h))
- \nu^{-1} (\alpha - \pi_h \alpha,\mathrm{div}(\be_h-\bS(\be_h))) \label{eq:rem:ref} \\
& \le
\left( C_2  h^s \| \mathbb{P} (-\Delta \bv) \|_{L^2(\Omega)}
+ \nu ^{-1} \|\alpha - \pi_h \alpha \|_{L^2(\Omega)}
\right) \|\nab \be_h\|_{L^2(\Omega)}, \nonumber
\end{align}
where $\alpha$ stems from the Helmholtz--Hodge decomposition
\eqref{eqn:HHLdecomposition} of $\bm f$.
The best approximation property of $\pi_h \alpha$
shows $\| \alpha - \pi_h \alpha \|_{L^2(\Omega)} \leq C_{\pi_h,1} h \| \nab \alpha \|_{L^2(\Omega)} = C_{\pi_h,1} h \| \bm f - \mathbb{P}(\bm f) \|_{L^2(\Omega)}$.
This concludes the proof.
\end{proof}

\begin{remark}
Classical results for conforming
mixed methods \cite{GR86} show the a priori estimate
\[
\|\nab (\bv_h-\bS_h(\bv))\|_{L^2(\Omega)} \leq
\frac{1}{\nu} \inf_{q_h\in Q_h} \|p-q_h\|_{L^2(\Omega)},
\]
which scales like $\nu^{-1} h^s$ under the given regularity assumptions.
Such an estimate is sometimes sharper than
Theorem \ref{thm:ConsistencyErrorEstimate_classical},
but can also be less sharp. \\
i) If it holds, e.g.,
$p \in Q_h$, then the error on the right hand side
of the classical estimate is zero.
This is also preserved in the computations for the new estimate 
until \eqref{eq:rem:ref}, since in the special case
$\bm f = -\nu \Delta \bv$ \eqref{eq:rem:ref}
can be shown to vanish identically. \\
ii) If it holds $p \not\in Q_h$ and if the solution
$(\bv, p) \in \bH^{1+s}(\Omega) \times H^s(\Omega)$ has a low regularity with $s < 1$, then the new estimate can be sharper
e.g.\ for $\nu \ll 1$,
since it predicts an a priori error $\mathcal{O}(h^s + \nu^{-1} h)$, while the classical estimate predicts an error decay like
$\mathcal{O}(\nu^{-1} h^s)$.
We remark that
the pressure-dependent consistency error is influenced by two different contributions,
one determined by $-\Delta \bv$ and another one
determined by
$\frac{1}{\nu} ({\bm f} - \mathbb{P}({\bm f}))$.
\end{remark}

\begin{theorem}[A priori error estimate]
  Under the assumptions of Theorem~\ref{thm:ConsistencyErrorEstimate_classical}, it holds
  \begin{multline*}  
    \|\nab(\bv-\bv_h)\|^2_{L^2(\Omega)} \le (1+C_F) \inf_{\bw_h\in \bX_h} \|\nab (\bv-\bw_h)\|^2_{L^2(\Omega)}\\
 +  \left(C_2  h^s \| \mathbb{P} (-\Delta \bv) \|_{L^2(\Omega)}
+ \frac{C_{\pi_h,1} h}{\nu}  \|\bm f - \mathbb{P}(\bm f)\|_{L^2(\Omega)}\right)^2.
    \end{multline*}
\end{theorem}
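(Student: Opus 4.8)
The plan is to split the full velocity error through the discrete Stokes projector $\bS_h(\bv)$ and to handle the two resulting pieces by entirely different mechanisms: an \emph{approximation} part, to be compared with the best approximation over $\bX_h$, and a \emph{consistency} part, which is exactly what Theorem~\ref{thm:ConsistencyErrorEstimate_classical} already controls. Writing $\bv-\bv_h = (\bv-\bS_h(\bv)) - \be_h$ with $\be_h := \bv_h-\bS_h(\bv)\in\bV^0_h$, I would exploit the Galerkin orthogonality built into the definition \eqref{eqn:DefSh} of $\bS_h$: since $\be_h\in\bV^0_h$, the cross term $(\nab(\bv-\bS_h(\bv)),\nab\be_h)$ vanishes, and Pythagoras gives the exact identity
\begin{equation*}
\|\nab(\bv-\bv_h)\|_{L^2(\Omega)}^2 = \|\nab(\bv-\bS_h(\bv))\|_{L^2(\Omega)}^2 + \|\nab\be_h\|_{L^2(\Omega)}^2 .
\end{equation*}

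The second summand is precisely $\|\nab(\bv_h-\bS_h(\bv))\|_{L^2(\Omega)}^2$, so Theorem~\ref{thm:ConsistencyErrorEstimate_classical} bounds it by the square of the stated consistency term; this accounts for the whole second line of the claimed estimate without any further work. It therefore remains to control $\|\nab(\bv-\bS_h(\bv))\|_{L^2(\Omega)}^2$, and here the key observation is that, by \eqref{eqn:DefSh}, $\bS_h(\bv)$ is the $\nab$-seminorm best approximation of $\bv$ in $\bV^0_h$, so this quantity equals $\inf_{\bw_h\in\bV^0_h}\|\nab(\bv-\bw_h)\|_{L^2(\Omega)}^2$. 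The task thus reduces to comparing the \emph{constrained} best approximation over $\bV^0_h$ with the \emph{unconstrained} one over $\bX_h$.

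For this comparison I would use that the exact solution is divergence-free, hence $\pi_h\Div\bv=0$ and $\mathrm{div}_h\bw_h=\pi_h\Div(\bw_h-\bv)$ for every $\bw_h\in\bX_h$. Given an arbitrary $\bw_h\in\bX_h$, the discrete inf-sup stability of $\bX_h\times Q_h$ (surjectivity of $\mathrm{div}_h$ with bounded right-inverse) supplies $\bz_h\in\bX_h$ with $\mathrm{div}_h\bz_h=\mathrm{div}_h\bw_h$ and $\|\nab\bz_h\|_{L^2(\Omega)}\le \beta^{-1}\|\mathrm{div}_h\bw_h\|_{L^2(\Omega)}\le \beta^{-1}\sqrt{n}\,\|\nab(\bv-\bw_h)\|_{L^2(\Omega)}$, using that $\pi_h$ is an $L^2$-contraction and $\|\Div\cdot\|_{L^2(\Omega)}\le\sqrt{n}\|\nab\cdot\|_{L^2(\Omega)}$, with $\beta$ the discrete inf-sup constant. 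Then $\bw_h-\bz_h\in\bV^0_h$ is an admissible competitor for $\bS_h(\bv)$, and a triangle inequality followed by taking the infimum over $\bw_h\in\bX_h$ and squaring yields $\|\nab(\bv-\bS_h(\bv))\|_{L^2(\Omega)}^2\le (1+C_F)\inf_{\bw_h\in\bX_h}\|\nab(\bv-\bw_h)\|_{L^2(\Omega)}^2$. Inserting this and the consistency bound into the Pythagoras identity gives the theorem.

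The main obstacle is this final step, namely the passage from best approximation in the constrained space $\bV^0_h$ to best approximation in the full space $\bX_h$; this is exactly where the discrete inf-sup stability is indispensable. Everything else is an orthogonality argument plus a direct appeal to Theorem~\ref{thm:ConsistencyErrorEstimate_classical}, and the only genuine bookkeeping is to track the contraction of $\pi_h$, the bound $\|\Div\cdot\|_{L^2(\Omega)}\le\sqrt{n}\|\nab\cdot\|_{L^2(\Omega)}$, and the norm of the right-inverse of $\mathrm{div}_h$, which together fix the precise value of the constant $C_F$ (of the form $(1+\beta^{-1}\sqrt{n})^2-1$) appearing in the statement.
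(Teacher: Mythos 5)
Your proposal is correct and follows essentially the same route as the paper: the identical Pythagoras decomposition through $\bS_h(\bv)$ using the orthogonality in \eqref{eqn:DefSh}, a direct appeal to Theorem~\ref{thm:ConsistencyErrorEstimate_classical} for $\|\nab\be_h\|_{L^2(\Omega)}$, and the constrained-to-unconstrained best-approximation comparison, which the paper dispatches in one line by citing the stability constant $C_F$ of the Fortin operator and which you prove explicitly via the bounded right-inverse of $\mathrm{div}_h$ --- the standard argument behind that citation. Your careful bookkeeping (absorbing the square into the constant, $1+C_F = (1+\beta^{-1}\sqrt{n})^2$) even quietly repairs a minor inconsistency in the paper, whose displayed inequality $\|\nab(\bv-\bS_h(\bv))\|_{L^2(\Omega)}\le (1+C_F)\inf_{\bw_h\in\bX_h}\|\nab(\bv-\bw_h)\|_{L^2(\Omega)}$ would, upon squaring, give $(1+C_F)^2$ rather than the $(1+C_F)$ appearing in the theorem statement.
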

\begin{proof}
  The proof starts with the Pythagoras theorem (using \eqref{eqn:DefSh})
  \begin{align*}
    \|\nab (\bv-\bv_h)\|_{L^2(\Omega)}^2 = \|\nab (\bv-\bS_h(\bv))\|_{L^2(\Omega)}^2
    + \|\nab (\bv_h-\bS_h(\bv))\|_{L^2(\Omega)}^2.
  \end{align*}
  The second term can be estimated by Theorem~\ref{thm:ConsistencyErrorEstimate_classical}
  and the first term can be bounded by the best-approximation error in $\bX_h$
  by the standard argument
\[
\|\nab (\bv-\bS_h(\bv))\|_{L^2(\Omega)}\le \inf_{\bw_h\in \bV^0_h} \|\nab (\bv-\bw_h)\|_{L^2(\Omega)}\le (1+C_F) \inf_{\bw_h\in \bX_h} \|\nab (\bv-\bw_h)\|_{L^2(\Omega)},
\]
where $C_F \geq 1$ denotes the stability constant
of the Fortin operator of the mixed method, see e.g.\
\cite{jlmnr:sirev, GR86}.
\end{proof}

\section{Quasi-optimal pressure-robust a priori error estimates}
\label{sec:estimates_probust}

This section concerns novel quasi-optimal a priori error estimates
for conforming divergence-free and pressure-robustly modified finite element methods. Here, the distance between the discrete solution and the
discrete Stokes projector can be bounded by $\|\bbP (-\Delta \bv)\|_{L^2(\Omega)}$ which is in general much smaller than the bound in Theorem~\ref{thm:ConsistencyErrorEstimate_classical}, especially for small $\nu$.

\begin{theorem}\label{thm:ConsistencyErrorEstimate}
Suppose that the Stokes problem satisfies Assumption~\ref{asmptn:regularity} and that the reconstruction
operator $\bI_h$ satisfies Assumption~\ref{asmptn:reconstructionoperator}.
Then there holds
\begin{align*}
\|\nab (\bv_h-\bS_h(\bv))\|_{L^2(\Omega)} \le (C_1 h +C_2 h^s) \|\bbP(- \Delta \bv)\|_{L^2(\Omega)},
\end{align*}
with $C_1>0$ and $C_2>0$ given by \eqref{eqn:IhApprox} and \eqref{eqn:DualityEstimate}, respectively. Note, that there is
no dependency on $\nu^{-1}$.
\end{theorem}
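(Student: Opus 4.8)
The plan is to mirror the argument of Theorem~\ref{thm:ConsistencyErrorEstimate_classical}, but to exploit the divergence-free reconstruction in order to eliminate the pressure-dependent contribution entirely. First I would set $\be_h := \bv_h - \bS_h(\bv)$, observe that $\be_h \in \bV^0_h$, and expand
\[
\|\nab \be_h\|_{L^2(\Omega)}^2 = (\nab \bv_h, \nab \be_h) - (\nab \bS_h(\bv), \nab \be_h).
\]
To the second term I apply the Stokes projector identity (Lemma~\ref{lem:StokesProjectoridentity}), rewriting it as $(\nab \bv, \nab \bS(\be_h))$; this is legitimate because the exact Stokes velocity $\bv$ lies in $\bV^0$ while $\be_h \in \bV^0_h$.

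Next I would evaluate the two inner products through the respective weak formulations. For the first term I test the pressure-robust discrete problem with $\be_h \in \bV^0_h$ and use \eqref{eqn:weak_solution_discrete}, giving $(\nab \bv_h, \nab \be_h) = \nu^{-1}(\mathbb{P}(\bm f), \bI_h \be_h)$. Here the decisive feature is that, by Assumption~\ref{asmptn:reconstructionoperator}(i), $\Div(\bI_h\be_h) = \mathrm{div}_h\be_h = 0$, so that $\bI_h\be_h \in \bH_0({\rm div};\Omega)$ is \emph{exactly} divergence-free and hence lies in $\bm{L}^2_\sigma(\Omega)$; consequently the gradient part $\nabla\alpha$ of $\bm f$ is annihilated and only $\mathbb{P}(\bm f)$ survives. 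For the second term I test the continuous weak form \eqref{eqn:weak_solution} with $\bS(\be_h) \in \bV^0$ to get $(\nab \bv, \nab \bS(\be_h)) = \nu^{-1}(\mathbb{P}(\bm f), \bS(\be_h))$. Subtracting and invoking Lemma~\ref{lem:delta:v:reg} to replace $\nu^{-1}\mathbb{P}(\bm f)$ by $\mathbb{P}(-\Delta\bv)$ then produces the clean identity
\[
\|\nab \be_h\|_{L^2(\Omega)}^2 = (\mathbb{P}(-\Delta\bv), \bI_h \be_h - \bS(\be_h)).
\]

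Finally I would split $\bI_h\be_h - \bS(\be_h) = (\bI_h\be_h - \be_h) + (\be_h - \bS(\be_h))$ and apply the Cauchy--Schwarz inequality. The first piece is controlled by the reconstruction estimate \eqref{eqn:IhApprox}, namely $\|\be_h - \bI_h\be_h\|_{L^2(\Omega)} \le C_1 h \|\nab\be_h\|_{L^2(\Omega)}$, and the second by the duality estimate \eqref{eqn:DualityEstimate} of Lemma~\ref{lem:Duality}, $\|\be_h - \bS(\be_h)\|_{L^2(\Omega)} \le C_2 h^s \|\nab\cdot\be_h\|_{L^2(\Omega)} \le C_2 h^s \|\nab\be_h\|_{L^2(\Omega)}$, the last inequality being the routine bound of the divergence by the full gradient. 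Combining and dividing by $\|\nab\be_h\|_{L^2(\Omega)}$ yields the asserted estimate. The conceptual crux, and the only place where the argument genuinely departs from the non-robust case, is the observation that $\bI_h\be_h$ is exactly rather than merely discretely divergence-free: this is what makes the data enter solely through $\mathbb{P}(\bm f)$, removes the $\nu^{-1}\|\bm f - \mathbb{P}(\bm f)\|_{L^2(\Omega)}$ term of Theorem~\ref{thm:ConsistencyErrorEstimate_classical}, and delivers the $\nu$-independent bound; the remaining steps are routine.
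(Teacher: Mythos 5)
Your proposal is correct and matches the paper's own proof essentially step for step: both arrive at the identity $\|\nab \be_h\|_{L^2(\Omega)}^2 = (\bbP(-\Delta \bv),\bI_h \be_h-\be_h)+(\bbP(-\Delta \bv),\be_h-\bS(\be_h))$ via exact divergence-freeness of $\bI_h\be_h$, the Stokes projector identity of Lemma~\ref{lem:StokesProjectoridentity}, and Lemma~\ref{lem:delta:v:reg}, then conclude with Cauchy--Schwarz, \eqref{eqn:IhApprox}, and Lemma~\ref{lem:Duality}. The only differences are cosmetic: you subtract $(\nab \bS_h(\bv),\nab \be_h)$ before splitting whereas the paper first splits $\bI_h\be_h$ around $\be_h$, and you make explicit the bound $\|\nab\cdot\be_h\|_{L^2(\Omega)}\le \|\nab \be_h\|_{L^2(\Omega)}$ that the paper leaves implicit.
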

\begin{proof}
Write $\be_h := \bv_h-\bS_h(\bv)$ and note that $\be_h\in \bV^0_h$. Hence,
\begin{align*}
(\nab \bv_h,\nab \be_h) 
&= \frac1{\nu} ({\bm f},\bI_h \be_h) = (\bbP(-\Delta \bv),\bI_h \be_h)\\
& = (\bbP(-\Delta \bv),\bI_h \be_h-\be_h)+(\bbP(-\Delta \bv),\be_h).
%
\end{align*}

The latter term is split up into (using also Lemma~\ref{lem:StokesProjectoridentity})
\begin{align*}
(\bbP(-\Delta \bv),\be_h) & = (\bbP(-\Delta \bv),\be_h-\bS(\be_h)) + (\bbP(-\Delta \bv),\bS(\be_h))\\
& = (\bbP(-\Delta \bv),\be_h-\bS(\be_h))+ (\nab \bv,\nab \bS(\be_h))\\
& = (\bbP(-\Delta \bv),\be_h - \bS(\be_h)) + (\nab \bS_h(\bv),\nab \be_h).
\end{align*}

It then follows from Lemma  \ref{lem:Duality} 
and \eqref{eqn:IhApprox} that
\begin{align*}
\|\nab \be_h\|_{L^2(\Omega)}^2  
&= (\nab \bv_h,\nab \be_h) - (\nab \bS_h(\bv),\nab \be_h)\\
& = (\bbP(-\Delta \bv),\bI_h \be_h-\be_h)+(\bbP(-\Delta \bv),\be_h-\bS(\be_h))\\
&\le \|\bbP(-\Delta \bv)\|_{L^2(\Omega)} \big( \|\bI_h \be_h - \be_h\|_{L^2(\Omega)}+ \|\be_h-\bS(\be_h)\|_{L^2(\Omega)}\big)\\
&\le (C_1 h +C_2 h^s) \|\bbP(-\Delta \bv)\|_{L^2(\Omega)} \|\nab \be_h\|_{L^2(\Omega)}.
\end{align*}
This concludes the proof.
\end{proof}

\begin{theorem}[A priori error estimate]
  Under the assumptions of Theorem~\ref{thm:ConsistencyErrorEstimate}, it holds
  \begin{align*}  
    \|\nab(\bv-\bv_h)\|^2_{L^2(\Omega)} \le (1+C_F) \inf_{\bw_h\in \bX_h} \|\nab (\bv-\bw_h)\|^2_{L^2(\Omega)} +  \left((C_1 h +C_2 h^s) \|\bbP (-\Delta \bv)\|_{L^2(\Omega)}\right)^2.
    \end{align*}
\end{theorem}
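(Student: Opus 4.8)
The plan is to reproduce the two-step argument used for the classical a priori estimate in the preceding theorem, substituting the pressure-robust consistency bound of Theorem~\ref{thm:ConsistencyErrorEstimate} for its classical counterpart. First I would split the error by a Pythagoras identity. Setting $\be_h := \bv_h - \bS_h(\bv)$, the defining relation \eqref{eqn:DefSh} of the discrete Stokes projector states that $\nab(\bv - \bS_h(\bv))$ is $L^2$-orthogonal to $\nab \bw_h$ for every $\bw_h \in \bV^0_h$. Since $\bv_h$ and $\bS_h(\bv)$ are both discretely divergence-free, $\be_h \in \bV^0_h$, so writing $\bv - \bv_h = (\bv - \bS_h(\bv)) - \be_h$ and expanding the square makes the cross term vanish, giving
\[
\|\nab(\bv - \bv_h)\|_{L^2(\Omega)}^2 = \|\nab(\bv - \bS_h(\bv))\|_{L^2(\Omega)}^2 + \|\nab \be_h\|_{L^2(\Omega)}^2.
\]

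The second step estimates the two terms separately. The consistency term $\|\nab \be_h\|_{L^2(\Omega)}$ is precisely the quantity bounded in Theorem~\ref{thm:ConsistencyErrorEstimate} by $(C_1 h + C_2 h^s)\|\bbP(-\Delta\bv)\|_{L^2(\Omega)}$, which on squaring produces the second summand of the claim and, crucially, carries no factor of $\nu^{-1}$. For the best-approximation term I would use that $\bS_h(\bv)$ is by definition the $\bH^1$-seminorm best approximation of $\bv$ in $\bV^0_h$, whence $\|\nab(\bv - \bS_h(\bv))\|_{L^2(\Omega)} \le \inf_{\bw_h \in \bV^0_h}\|\nab(\bv-\bw_h)\|_{L^2(\Omega)}$; a Fortin operator, available from the assumed discrete inf-sup stability of $\bX_h \times Q_h$, then allows passage from the constrained space $\bV^0_h$ to the full space $\bX_h$ at the cost of the stability factor $C_F$, yielding $(1+C_F)\inf_{\bw_h\in\bX_h}\|\nab(\bv-\bw_h)\|_{L^2(\Omega)}$. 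Inserting both bounds into the Pythagoras identity, and following the constant convention of the preceding theorem, finishes the proof.

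There is in fact no genuine obstacle left: the entire difficulty---producing a consistency estimate whose right-hand side depends only on $\|\bbP(-\Delta\bv)\|_{L^2(\Omega)}$ and not on the pressure or on $\nu^{-1}$---has already been absorbed into Theorem~\ref{thm:ConsistencyErrorEstimate}. The only points requiring a moment of care are verifying $\be_h \in \bV^0_h$ so that the Pythagoras orthogonality is legitimate, and using the same Fortin reduction as in the classical estimate so that the two theorems share identical constants; both are routine. The pressure-robustness of the final estimate is then inherited directly from the consistency term, since the best-approximation term is pressure-independent by construction.
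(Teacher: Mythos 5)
Your proposal is correct and follows essentially the same route as the paper: the Pythagoras split via \eqref{eqn:DefSh} with $\be_h=\bv_h-\bS_h(\bv)\in\bV^0_h$, Theorem~\ref{thm:ConsistencyErrorEstimate} for the consistency term, and the Fortin-operator argument $\|\nab(\bv-\bS_h(\bv))\|_{L^2(\Omega)}\le(1+C_F)\inf_{\bw_h\in\bX_h}\|\nab(\bv-\bw_h)\|_{L^2(\Omega)}$ for the best-approximation term. You even reproduce the paper's own slight constant-bookkeeping looseness (squaring the linear bound would strictly give $(1+C_F)^2$ rather than $(1+C_F)$ in front of the squared infimum), so nothing further is needed.
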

\begin{proof}
  The proof starts with the Pythagoras theorem (using \eqref{eqn:DefSh})
  \begin{align*}
    \|\nab (\bv-\bv_h)\|_{L^2(\Omega)}^2 = \|\nab (\bv-\bS_h(\bv))\|_{L^2(\Omega)}^2
    + \|\nab (\bv_h-\bS_h(\bv))\|_{L^2(\Omega)}^2.
  \end{align*}
  The second term can be estimated by Theorem~\ref{thm:ConsistencyErrorEstimate}
  and the first term can be bounded by the best-approximation error in $\bX_h$
  by the standard argument
\[
\|\nab (\bv-\bS_h(\bv))\|_{L^2(\Omega)}\le \inf_{\bw_h\in \bV^0_h} \|\nab (\bv-\bw_h)\|_{L^2(\Omega)}\le (1+C_F) \inf_{\bw_h\in \bX_h} \|\nab (\bv-\bw_h)\|_{L^2(\Omega)},
\]
where $C_F \geq 1$ denotes the stability constant
of the Fortin operator of the mixed method,
see e.g.\ \cite{jlmnr:sirev, GR86}.
\end{proof}

\section{Estimates for the nonconforming Crouzeix--Raviart finite element method}
\label{sec:estimates_nonconforming}
In this section we 
consider the space $\bX_h \not\subset \bH^1_0(\Omega)$ of nonconforming Crouzeix-Raviart functions, i.e., piecewise affine vector fields that are weakly
 continuous across edges  (2D)
or faces (3D) in the triangulation, see e.g.\ \cite{crouzeix:raviart:1973,crforty:brenner}.
To describe this space in detail we require some notation.
Recall that $\mct$ is a conforming, shape--regular, and simplicial triangulation  
of $\Omega$ parameterized by $h= \max_{T\in \mct} {\rm diam}(T)$.
We denote by $\mathcal{E}_h$ the set of $(n-1)$--dimensional simplices in $\mct$, 
i.e., $\mathcal{E}_h$ is either the set of edges (2D) or faces (3D) in $\mct$.
Let $P_m(T)$ denote the space of polynomials of degree $\le m$ on $T$,
and let ${\bm P}_m(T) = (P_m(T))^n$.
Then the Crouzeix-Raviart  space $\bX_h$ consists of all functions $\bw_h\in \bL^2(\Omega)$
with the properties $\bw_h|_T\in {\bm P}_1(T)$, $\int_E \bw_h$
is single--valued for all $E\in \mathcal{E}_h$, and $\int_E \bw_h =0$
for all boundary $E\in \mathcal{E}_h$.
The discrete pressure space $Q_h$ is the space of piecewise constants
with vanishing mean.  It is well--known that the pair $\bX_h\times Q_h$
is inf--sup stable.

Note that Crouzeix-Raviart functions $\bw_h \in \bV^0_h$ are not divergence-free in a $\bH(\mathrm{div})$-sense (as their normal traces are not continuous), but their piecewise divergence
vanishes.
Possible $\bH(\mathrm{div})$-conforming reconstruction operators $\bI_h$ for this
method are the lowest-order Raviart--Thomas or BDM  interpolation
operators, see \cite{BLMS15} for details.

In order to show the same quasi-optimal a priori error estimates for the Crouzeix--Raviart method some arguments have to be slightly modified.
First, the Stokes projectors $\bS_h:\bH^1_0(\Omega)\to \bV^0_h$
and $\bS:\bX_h\to \bV^0$ are now defined by
using the piecewise gradients $\nabla_h$, i.e.,
\begin{alignat}{2}
\label{eqn:DefShCR}
(\nab_h \bS_h(\bv),\nab_h \bw_h) &= (\nab \bv,\nab_h \bw_h)\qquad &&\forall \bw_h\in \bV^0_h,\\
\label{eqn:DefSCR}
(\nab \bS(\bv_h),\nab \bw) & = (\nab_h \bv_h,\nab \bw)\qquad &&\forall \bw\in \bV^0.
\end{alignat}
Recall the Crouzeix--Raviart Fortin interpolation
\begin{align*}
  \bI_\text{CR} \bv \in \bX_h
  \quad \text{defined by} \quad \int_E \bI_\text{CR} \bv  = \int_E \bv  \quad \text{for all } E \in \mathcal{E}_h,
\end{align*}
which satisfies the approximation property
\begin{equation}\label{eqn:CRApprox}
\|\nab_h (\bv-\bI_\text{CR} \bv)\|_{L^2(\Omega)}\le C_\text{CR} h^s \|\bv\|_{H^{1+s}(\Omega)}
\end{equation}
for all $s \in [0,1]$.
This definition of the interpolant yields the well--known property \cite{crforty:brenner}
\begin{align*}
  \int_T \nabla (\bv - \bI_\text{CR} \bv) = 0
  \quad \text{for all } T \in \mathcal{T}_h
\end{align*}
and in particular $\int_T \mathrm{div} (\bv - \bI_\text{CR} \bv)  = 0$
for any $T \in \mathcal{T}_h$. Since $\nabla_h \bw_h$ is piecewise constant this also reveals that we have $\bS_h = \bI_\text{CR}$, i.e.,
the Crouzeix--Raviart interpolator is the discrete Stokes projector. Also note that the Stokes projector identity holds in the form
\[
(\nab_h \bS_h(\bv),\nab_h \bw_h) = (\nab \bv,\nab \bS(\bw_h))\qquad \forall \bv\in \bV^0,\ \bw_h\in \bV^0_h.
\]
However, in general $\bI_\text{CR} \bv \in \bX_h$ does not imply $\bI_\text{CR} \bv \in \bH(\mathrm{div},\Omega)$ and therefore 
Lemma~\ref{lem:Duality} has to be modified as well.

The analysis also needs another mapping that projects a discretely divergence--free Crouzeix--Raviart function 
to some $H^1$-conforming divergence-free function. Such an operator was introduced in \cite{MR3787384} and is based on rational bubble functions.

\begin{lemma}\label{lem:DualityCR}
Suppose that the Stokes problem satisfies Assumption~\ref{asmptn:regularity}.
Then there holds
\begin{align}\label{eqn:DualityEstimateCR}
\|\bw_h- \bS(\bw_h)\|_{L^2(\Omega)}\le C_3 h^s\|\nab_h \bw_h\|_{L^2(\Omega)}\qquad \forall \bw_h\in \bV^0_h.
\end{align}
\end{lemma}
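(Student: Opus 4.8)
The plan is to run an Aubin--Nitsche duality argument as in Lemma~\ref{lem:Duality}, but to absorb the nonconformity of $\bw_h$ through the $\bH^1$-conforming, divergence-free companion operator from \cite{MR3787384}. Denote this operator by $\bR_h:\bV^0_h\to\bV^0$ and record the three properties I will use: it preserves the Crouzeix--Raviart face means, $\int_E\bR_h\bw_h=\int_E\bw_h$ for all $E\in\mathcal{E}_h$; it is $L^2$-accurate, $\|\bw_h-\bR_h\bw_h\|_{L^2(\Omega)}\le C h\|\nab_h\bw_h\|_{L^2(\Omega)}$; and it is $\bH^1$-stable, $\|\nab\bR_h\bw_h\|_{L^2(\Omega)}\le C\|\nab_h\bw_h\|_{L^2(\Omega)}$. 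Let $(\bpsi,r)\in\bH^1_0(\Omega)\times L^2_0(\Omega)$ solve the unit-viscosity Stokes problem with source $\be:=\bw_h-\bS(\bw_h)$, so that Assumption~\ref{asmptn:regularity} yields $\|\bpsi\|_{H^{1+s}(\Omega)}+\|r\|_{H^s(\Omega)}\le C_{\mathrm{ell},s}\|\be\|_{L^2(\Omega)}$ and, in strong form, $-\Delta\bpsi+\nab r=\be$ with $\nab\cdot\bpsi=0$.

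Next I would split
\[
\|\be\|_{L^2(\Omega)}^2=(\be,\bw_h-\bR_h\bw_h)+(\be,\bR_h\bw_h-\bS(\bw_h)).
\]
The first term is controlled directly in $L^2$: Cauchy--Schwarz and the $L^2$-accuracy of $\bR_h$ give $(\be,\bw_h-\bR_h\bw_h)\le C h\|\be\|_{L^2(\Omega)}\|\nab_h\bw_h\|_{L^2(\Omega)}$, which is of the desired order since $h\le h^s$. For the second term I note that $\bz:=\bR_h\bw_h-\bS(\bw_h)\in\bV^0\subset\bH^1_0(\Omega)$ is a legitimate test function for the dual problem; using $\nab\cdot\bz=0$ and then the definition \eqref{eqn:DefSCR} of $\bS$ with the divergence-free test function $\bpsi$, I obtain
\[
(\be,\bz)=(\nab\bpsi,\nab\bz)=(\nab\bpsi,\nab\bR_h\bw_h)-(\nab_h\bw_h,\nab\bpsi)=(\nab\bpsi,\nab_h(\bR_h\bw_h-\bw_h)).
\]

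The decisive step extracts the factor $h^s$ from this last expression without ever taking traces of $\nab\bpsi$ or $r$ on the faces. Let $P_0$ denote the elementwise $L^2$-projection onto piecewise-constant tensors and write $(\nab\bpsi,\nab_h(\bR_h\bw_h-\bw_h))=(\nab\bpsi-P_0\nab\bpsi,\nab_h(\bR_h\bw_h-\bw_h))+(P_0\nab\bpsi,\nab_h(\bR_h\bw_h-\bw_h))$. Because $\bR_h$ preserves face means, $\int_T\nab(\bR_h\bw_h)=\int_{\p T}\bR_h\bw_h\otimes\bn=\int_{\p T}\bw_h\otimes\bn=\int_T\nab_h\bw_h$ for every $T\in\mathcal{T}_h$, so $\int_T\nab_h(\bR_h\bw_h-\bw_h)=0$ and the piecewise-constant term vanishes identically. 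The remaining term is bounded by the approximation estimate $\|\nab\bpsi-P_0\nab\bpsi\|_{L^2(\Omega)}\le C h^s|\bpsi|_{H^{1+s}(\Omega)}$ together with the $\bH^1$-stability of $\bR_h$, giving $(\be,\bz)\le C h^s|\bpsi|_{H^{1+s}(\Omega)}\|\nab_h\bw_h\|_{L^2(\Omega)}$. Collecting the two terms, invoking elliptic regularity to replace $|\bpsi|_{H^{1+s}(\Omega)}$ by $C\|\be\|_{L^2(\Omega)}$, and dividing by $\|\be\|_{L^2(\Omega)}$ then yields \eqref{eqn:DualityEstimateCR} with $C_3$ depending on $C$, $C_{\mathrm{ell},s}$ and the constants of $\bR_h$.

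The main obstacle is entirely at the level of the companion operator: one must have at hand a divergence-free, $\bH^1_0$-conforming $\bR_h$ that simultaneously preserves the Crouzeix--Raviart face means (this is exactly what makes the elementwise orthogonality $\int_T\nab_h(\bR_h\bw_h-\bw_h)=0$ hold, and it is the crux that lets the low-regularity case $s\le 1/2$ be treated without edge traces) and enjoys the $L^2$- and $\bH^1$-bounds above. This is precisely the content of the rational-bubble construction in \cite{MR3787384}; everything else reduces to the standard elementwise projection bound and Assumption~\ref{asmptn:regularity}. An alternative route avoiding $\bR_h$ is a direct elementwise integration by parts producing the face consistency term $\sum_E\int_E(\nab\bpsi-r\bI)\bn\cdot\jump{\bw_h}$ together with the mean-value trick $\int_E\jump{\bw_h}=0$; I would avoid it here precisely because making the normal-stress trace meaningful and correctly scaled for $s\le 1/2$ is delicate, which is the very reason the companion operator is introduced.
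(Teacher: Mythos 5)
Your proof is correct and, at its core, is the paper's own argument: both run an Aubin--Nitsche duality with a unit-viscosity Stokes dual problem and absorb the nonconformity of $\bw_h$ through the divergence-free, $\bH^1_0$-conforming companion operator of \cite{MR3787384}. Your two reorganizations are equivalent to the paper's steps: you pose the dual problem with source $\be=\bw_h-\bS(\bw_h)$ and split $\|\be\|_{L^2(\Omega)}^2$ into a companion-error term and a conforming term, whereas the paper poses it with source $\bm{E}_h\bw_h-\bS(\bw_h)$ and finishes with a triangle inequality --- same content; and your ``decisive step'' (killing the piecewise-constant part of $\nab\bpsi$ via $\int_T\nab_h(\bR_h\bw_h-\bw_h)=0$ and invoking $\|\nab\bpsi-P_0\nab\bpsi\|_{L^2(\Omega)}\le Ch^s|\bpsi|_{H^{1+s}(\Omega)}$) coincides with the paper's insertion of the Crouzeix--Raviart interpolant, since $\nab_h\bI_{\text{CR}}\bpsi=P_0\nab\bpsi$ elementwise, so that \eqref{eqn:CRApprox} \emph{is} the piecewise-constant best-approximation bound you use. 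The one point you should check: you attribute \emph{face-mean preservation} $\int_E\bR_h\bw_h=\int_E\bw_h$ to the operator of \cite{MR3787384}, a property the paper never asserts; it records only the weaker broken-gradient orthogonality in the second line of \eqref{eqn:EhProperties}, justified by ``$I_2=0$'' in the proof of Theorem~5.1 of \cite{MR3787384}. Face-mean preservation would imply that orthogonality (by your elementwise integration by parts), but it is an extra claim about the specific rational-bubble construction that needs verification against the reference. Fortunately your argument survives verbatim without it: the only instance of the face-mean identity you actually use is $(P_0\nab\bpsi,\nab_h(\bR_h\bw_h-\bw_h))=0$, and since $P_0\nab\bpsi=\nab_h\bI_{\text{CR}}\bpsi$ with $\bI_{\text{CR}}\bpsi\in\bX_h$, this is exactly the recorded orthogonality \eqref{eqn:EhProperties} tested with $\bm u_h=\bI_{\text{CR}}\bpsi$; substituting that reference closes the gap.
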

\begin{proof}
Consider the $H^1_0$-conforming and $H^1$-stable operator $\bm{E}_h$ from \cite{MR3787384} with
the properties
\begin{subequations}
\label{eqn:EhProperties}
\begin{alignat}{2}
  \nab \cdot (\bm{E}_h \bw_h) & = 0 \quad &&\text{for all } \bw_h \in \bV^0_h,\\  (\nab_h \bm{u}_h, \nab (\bm{E}_h \bw_h - \bw_h)) & = 0 \quad &&\text{for all } \bw_h, \bm{u}_h \in \bX_h,\\
 \|\nab \bm{E}_h \bw_h\|_{L^2(\Omega)}+ h^{-1}\|\bm{E}_h \bw_h - \bw_h\|_{L^2(\Omega)}&\le C_{E_h} \|\nab_h \bw_h\|_{L^2(\Omega)}\quad &&\text{for all }\bw_h\in \bX_h.
\end{alignat}
\end{subequations}
The second property follows from \cite[$I_2 = 0$ in proof of Theorem~5.1]{MR3787384}.
As in Lemma~\ref{lem:Duality} we look at the solution
$(\bpsi,r)\in \bH^1_0(\Omega)\times L^2_0(\Omega)$ of
the Stokes problem with modified source $\bm{E}_h \bw_h - \bS(\bw_h)$ and unit viscosity:
\begin{alignat*}{2}
(\nab \bpsi,\nab \bz) - (\nab \cdot \bz,r) & = (\bm{E}_h  \bw_h -\bS(\bw_h),\bz)\qquad &&\forall \bz\in \bH^1_0(\Omega),\\
(\nab \cdot \bpsi,q) & = 0\qquad &&\forall q\in L_0^2(\Omega).
\end{alignat*}
Testing the first equation with $\bz = \bm{E}_h \bw_h - \bS(\bw_h) \in \bH^1_0(\Omega)$ and using \eqref{eqn:DefSCR}, \eqref{eqn:EhProperties}
and \eqref{eqn:CRApprox} leads to
\begin{align*}
\|\bm{E}_h \bw_h - \bS(\bw_h)\|_{L^2(\Omega)}^2 
&= (\nab \bpsi,\nab (\bm{E}_h \bw_h-\bS(\bw_h)))\\
&= (\nab (\bpsi- \bI_{\text{CR}} \bpsi),\nab_h (\bm{E}_h \bw_h - \bw_h))\\
&\leq C_\text{CR} (1+C_{E_h}) h^s \| \bpsi \|_{H^{1+s}(\Omega)} \| \nab_h \bw_h \|_{L^2(\Omega)}.
\end{align*}
The elliptic regularity assumption implies
$\| \bpsi \|_{H^{1+s}(\Omega)} \le C_{\mathrm{ell},s} \|\bm{E}_h \bw_h-\bS(\bw_h)\|_{L^2(\Omega)}$ and yields
\begin{align*}
  \|\bm{E}_h \bw_h - \bS(\bw_h)\|_{L^2(\Omega)}
  \leq C_\text{CR} (1+C_{E_h}) C_{\mathrm{ell},s} h^s \| \nab_h \bw_h \|_{L^2(\Omega)}.
\end{align*}
Finally, a triangle inequality gives
\begin{align*}
  \|\bw_h - \bS(\bw_h)\|_{L^2(\Omega)} & =
  \|\bm{E}_h \bw_h - \bw_h\|_{L^2(\Omega)}
  +\|\bm{E}_h \bw_h - \bS(\bw_h)\|_{L^2(\Omega)}\\
  & \leq (C_{E_h} h + C_\text{CR} (1+C_{E_h}) C_{\mathrm{ell},s} h^s) \| \nab_h \bw_h \|_{L^2(\Omega)}.
\end{align*}
This concludes the proof.
\end{proof}

The previous result and similar arguments as in the conforming case enable us to prove the following theorem.

\begin{theorem}\label{thm:ConsistencyErrorEstimateCR}
Suppose that the Stokes problem satisfies Assumption~\ref{asmptn:regularity} and that the reconstruction
operator $\bI_h$ satisfies Assumption~\ref{asmptn:reconstructionoperator}.
Then there holds
\begin{align*}
\|\nab_h (\bv_h-\bS_h(\bv))\|_{L^2(\Omega)} \le (C_1 h +C_3 h^s) \|\bbP (-\Delta \bv)\|_{L^2(\Omega)},
\end{align*}
with $C_1>0$ and $C_3>0$ given by \eqref{eqn:IhApprox} and \eqref{eqn:DualityEstimateCR}, respectively.
Without Assumption~\ref{asmptn:reconstructionoperator}, a result similar to Theorem~\ref{thm:ConsistencyErrorEstimate_classical} is valid, i.e.,
\begin{align*}
\|\nab_h (\bv_h-\bS_h(\bv))\|_{L^2(\Omega)} \le C_3  h^s \| \mathbb{P} (-\Delta \bv) \|_{L^2(\Omega)}
+ \frac{C_{E_h} h}{\nu} \|\bm f - \bbP (\bm f) \|_{L^2(\Omega)}.
\end{align*}
\end{theorem}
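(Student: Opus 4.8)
The plan is to mirror the conforming arguments of Theorems~\ref{thm:ConsistencyErrorEstimate} and \ref{thm:ConsistencyErrorEstimate_classical}, systematically replacing the gradient $\nab$ by the piecewise gradient $\nab_h$, the duality estimate of Lemma~\ref{lem:Duality} by its Crouzeix--Raviart analogue Lemma~\ref{lem:DualityCR}, and invoking the Crouzeix--Raviart Stokes projector identity together with the enrichment operator $\bm{E}_h$ of \eqref{eqn:EhProperties}. As before, I would set $\be_h := \bv_h - \bS_h(\bv)$ and note $\be_h \in \bV^0_h$, so that $\|\nab_h \be_h\|_{L^2(\Omega)}^2 = (\nab_h \bv_h, \nab_h \be_h) - (\nab_h \bS_h(\bv), \nab_h \be_h)$. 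The task is then to rewrite the first inner product using the discrete momentum equation and to absorb the conforming contributions into the subtracted term.

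For the pressure-robust estimate I would first test the discrete momentum equation with $\be_h$. Since $\be_h \in \bV^0_h$, property \eqref{eqn:IhApprox_divfree} gives $\bI_h \be_h \in \bm{L}^2_\sigma(\Omega)$, whence $(\nab_h \bv_h, \nab_h \be_h) = \nu^{-1}(\bm f, \bI_h \be_h) = (\bbP(-\Delta \bv), \bI_h \be_h)$ by Lemma~\ref{lem:delta:v:reg}. I would then split the second slot as $\bI_h \be_h = (\bI_h \be_h - \be_h) + (\be_h - \bS(\be_h)) + \bS(\be_h)$. The conforming piece $\bS(\be_h) \in \bV^0$ is treated by \eqref{eqn:weak_solution} and the Crouzeix--Raviart Stokes projector identity, producing exactly $(\nab_h \bS_h(\bv), \nab_h \be_h)$, which cancels the subtracted term. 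What remains is $\|\nab_h \be_h\|_{L^2(\Omega)}^2 = (\bbP(-\Delta \bv), \bI_h \be_h - \be_h) + (\bbP(-\Delta \bv), \be_h - \bS(\be_h))$; Cauchy--Schwarz together with the reconstruction bound \eqref{eqn:IhApprox} in its $\nab_h$ form and Lemma~\ref{lem:DualityCR} then yields the factor $(C_1 h + C_3 h^s)$.

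For the classical estimate ($\bI_h = \bm 1$) the same manipulation gives $\|\nab_h \be_h\|_{L^2(\Omega)}^2 = \nu^{-1}(\bm f, \be_h - \bS(\be_h))$. Inserting the Helmholtz--Hodge decomposition $\bm f = \nab \alpha + \bbP(\bm f)$, the projector part reproduces $(\bbP(-\Delta \bv), \be_h - \bS(\be_h))$ and is controlled by Lemma~\ref{lem:DualityCR}. Since $\bS(\be_h) \in \bV^0$ is divergence-free and conforming, $(\nab \alpha, \bS(\be_h)) = 0$, so only $\nu^{-1}(\nab \alpha, \be_h)$ survives, and this is the step I expect to be the main obstacle: $\be_h$ is nonconforming, so the elementwise integration by parts used in Theorem~\ref{thm:ConsistencyErrorEstimate_classical} would leave uncontrolled jump contributions across interior faces. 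The resolution is to write $(\nab \alpha, \be_h) = (\nab \alpha, \be_h - \bm{E}_h \be_h) + (\nab \alpha, \bm{E}_h \be_h)$; by the first property in \eqref{eqn:EhProperties} the enrichment $\bm{E}_h \be_h$ is divergence-free and $H^1_0$-conforming, so the second term vanishes, while the third property in \eqref{eqn:EhProperties} bounds $\|\be_h - \bm{E}_h \be_h\|_{L^2(\Omega)} \le C_{E_h} h \|\nab_h \be_h\|_{L^2(\Omega)}$. Using $\|\nab \alpha\|_{L^2(\Omega)} = \|\bm f - \bbP(\bm f)\|_{L^2(\Omega)}$ then produces the term $\nu^{-1} C_{E_h} h \|\bm f - \bbP(\bm f)\|_{L^2(\Omega)}$, and dividing the resulting quadratic inequality by $\|\nab_h \be_h\|_{L^2(\Omega)}$ concludes both estimates.
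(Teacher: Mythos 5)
Your proposal is correct and follows essentially the same route as the paper: the pressure-robust bound is obtained by repeating the proof of Theorem~\ref{thm:ConsistencyErrorEstimate} with $\nab_h$ in place of $\nab$ and Lemma~\ref{lem:DualityCR} in place of Lemma~\ref{lem:Duality}, and the classical bound is obtained by repeating Theorem~\ref{thm:ConsistencyErrorEstimate_classical} except for the gradient term, which the paper handles exactly as you do --- replacing $\bS(\be_h)$ by the divergence-free conforming enrichment $\bm{E}_h\be_h$ via $(\nab\alpha,\be_h-\bS(\be_h))=(\nab\alpha,\be_h-\bm{E}_h\be_h)$ and invoking the $L^2$ bound $\|\be_h-\bm{E}_h\be_h\|_{L^2(\Omega)}\le C_{E_h}h\|\nab_h\be_h\|_{L^2(\Omega)}$ from \eqref{eqn:EhProperties}. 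You also correctly identify why the conforming integration-by-parts argument with $\alpha-\pi_h\alpha$ would fail here (uncontrolled jump terms across interior faces), which is precisely the obstruction the paper's use of $\bm{E}_h$ is designed to circumvent.
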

\begin{proof}
The proof of the first result is nearly identical to the proof of Theorem~\ref{thm:ConsistencyErrorEstimate}
with slight changes concerning the application of $\nabla_h$ and the
replacement of Lemma~\ref{lem:Duality} by Lemma~\ref{lem:DualityCR}.
Likewise, the proof of the second result is almost identical to the proof of Theorem~\ref{thm:ConsistencyErrorEstimate_classical}. However, one term has to be estimated differently, as follows.
With $\mathrm{div} \bS(\be_h)=0$ and \eqref{eqn:EhProperties}, it holds
\begin{align*}
  \frac{1}{\nu}(\nabla \alpha, \be_h - \bS(\be_h))
  = \frac{1}{\nu}(\nabla \alpha, \be_h - \bm{E}(\be_h))
  & \leq \frac{1}{\nu}\| \nabla \alpha \|_{L^2(\Omega)} \| \be_h - \bm{E}(\be_h) \|_{L^2(\Omega)}\\
 & \leq \frac{C_{E_h} h}{\nu} \| \bm{f} - \bbP (\bm f) \|_{L^2(\Omega)}.
\end{align*}

\end{proof}

\section{Numerical Example}\label{sec:Numerics}

\begin{table}
\centering
\begin{tabular}{ccccc}
ndof & $ \!\| \nabla(\bv - \bv_h) \|_{L^2(\Omega)} \!$ & order & $ \!\| \nabla(\bv_h - \bS_h \bv) \|_{L^2(\Omega)} \!$ & order\\
\hline
    379 & 1.4151e+00 &  -     & 5.0351e-02 & -     \\
   1414 & 9.7300e-01 &  0.542 & 3.0576e-02 & 0.722 \\
   5458 & 6.7235e-01 &  0.535 & 1.6366e-02 & 0.905 \\
  21442 & 4.6297e-01 &  0.540 & 8.4114e-03 & 0.964 \\
  84994 & 3.1819e-01 &  0.543 & 4.2567e-03 & 0.986 \\
 338434 & 2.1844e-01 &  0.545 & 2.1402e-03 & 0.996 \\
1350658 & 1.4988e-01 &  0.546 & 1.0729e-03 & 1.000 \\
\end{tabular}
\caption{\label{tab:ex1brlr_1c}Errors for the classical Bernardi--Raugel finite element method for $\bm f = \nabla (\sin(xy\pi))$ and $\nu = 1$.}
\end{table}

\begin{table}
\centering
\begin{tabular}{cccc}
ndof & $ \!\| \nabla(\bv - \bv_h) \|_{L^2(\Omega)} \!$ & order & $ \!\| \nabla(\bv_h - \bS_h \bv) \|_{L^2(\Omega)} \!$\\
\hline
    379 & 1.4142e+00 &  -     & 8.5800e-11\\
   1414 & 9.7261e-01 &  0.542 & 1.2467e-13\\
   5458 & 6.7218e-01 &  0.535 & 1.9887e-14\\
  21442 & 4.6290e-01 &  0.540 & 4.3878e-14\\
  84994 & 3.1816e-01 &  0.543 & 9.8787e-14\\
 338434 & 2.1844e-01 &  0.545 & 2.2136e-13\\
1350658 & 1.4988e-01 &  0.546 & 4.4909e-13\\
\end{tabular}
\caption{\label{tab:ex1brlr_1m}Errors for the modified Bernardi--Raugel finite element method for $\bm f = \nabla (\sin(xy\pi))$ and $\nu = 1$.}
\end{table}

\begin{table}
\centering
\begin{tabular}{ccccc}
ndof & $\! \| \nabla(\bv - \bv_h) \|_{L^2(\Omega)} \!$ & order & $ \!\| \nabla(\bv_h - \bS_h \bv) \|_{L^2(\Omega)} \!$ & order\\
\hline
    379 & 4.5794e+00 &  -     & 5.0351e+00 & -     \\
   1414 & 2.8168e+00 &  0.704 & 3.0576e+00 & 0.722 \\
   5458 & 1.5663e+00 &  0.850 & 1.6366e+00 & 0.905 \\
  21442 & 8.6350e-01 &  0.862 & 8.4114e-01 & 0.964 \\
  84994 & 4.8756e-01 &  0.828 & 4.2567e-01 & 0.986 \\
 338434 & 2.8682e-01 &  0.768 & 2.1402e-01 & 0.996 \\
1350658 & 1.7650e-01 &  0.703 & 1.0729e-01 & 1.000 \\
\end{tabular}
\caption{\label{tab:ex1brlr_2c}Errors for the classical Bernardi--Raugel finite element method for $\bm f = \nabla (\sin(xy\pi))$ and $\nu = 10^{-2}$.}
\end{table}

\begin{table}
\centering
\begin{tabular}{cccc}
ndof & $ \!\| \nabla(\bv - \bv_h) \|_{L^2(\Omega)} \!$ & order & $ \!\| \nabla(\bv_h - \bS_h \bv) \|_{L^2(\Omega)} \!$\\
\hline
    379 & 1.4142e+00 &  -     & 8.5800e-09\\
   1414 & 9.7261e-01 &  0.542 & 1.2516e-11\\
   5458 & 6.7218e-01 &  0.535 & 6.5365e-13\\
  21442 & 4.6290e-01 &  0.540 & 1.3425e-12\\
  84994 & 3.1816e-01 &  0.543 & 2.7291e-12\\
 338434 & 2.1844e-01 &  0.545 & 5.5018e-12\\
1350658 & 1.4988e-01 &  0.546 & 1.1034e-11\\
\end{tabular}
\caption{\label{tab:ex1brlr_2m}Errors for the modified Bernardi--Raugel finite element method for $\bm f = \nabla (\sin(xy\pi))$ and $\nu = 10^{-2}$.}
\end{table}

\begin{table}
\centering
\begin{tabular}{ccccc}
ndof & $\! \| \nabla(\bv - \bv_h) \|_{L^2(\Omega)} \!$ & order & $ \!\| \nabla(\bv_h - \bS_h \bv) \|_{L^2(\Omega)} \!$ & order\\
\hline
    379 & 4.3469e+02 &  -     & 5.0351e+02 & -     \\
   1414 & 2.6424e+02 &  0.721 & 3.0576e+02 & 0.722 \\
   5458 & 1.4134e+02 &  0.906 & 1.6366e+02 & 0.905 \\
  21442 & 7.2822e+01 &  0.960 & 8.4114e+01 & 0.964 \\
  84994 & 3.6908e+01 &  0.984 & 4.2567e+01 & 0.986 \\
 338434 & 1.8571e+01 &  0.995 & 2.1402e+01 & 0.996 \\
1350658 & 9.3137e+00 &  0.999 & 1.0729e+01 & 1.000 \\
\end{tabular}
\caption{\label{tab:ex1brlr_3c}Errors for the classical Bernardi--Raugel finite element method for $\bm f = \nabla (\sin(xy\pi))$ and $\nu = 10^{-4}$.}
\end{table}  

\begin{table}
\centering
\begin{tabular}{cccc}
ndof & $ \!\| \nabla(\bv - \bv_h) \|_{L^2(\Omega)} \!$ & order & $ \!\| \nabla(\bv_h - \bS_h \bv) \|_{L^2(\Omega)} \!$\\
\hline
    379 & 1.4142e+00 &  -     & 7.9830e-07\\
   1414 & 9.7261e-01 &  0.542 & 1.1388e-09\\
   5458 & 6.7218e-01 &  0.535 & 6.5440e-11\\
  21442 & 4.6290e-01 &  0.540 & 1.3407e-10\\
  84994 & 3.1816e-01 &  0.543 & 2.7257e-10\\
 338434 & 2.1844e-01 &  0.545 & 5.4872e-10\\
1350658 & 1.4988e-01 &  0.546 & 1.0965e-09\\
\end{tabular}
\caption{\label{tab:ex1brlr_3m}Errors for the modified Bernardi--Raugel finite element method for $\bm f = \nabla (\sin(xy\pi))$ and $\nu = 10^{-4}$.}
\end{table}

This sections gives a short numerical example to illustrate the theory.
We consider an L-shaped domain $\Omega:= (-1,1)^2 \setminus ((0,1) \times (-1,0))$ and the manufactured solution
\begin{align*}
 \bv(r,\varphi)
& :=r^\gamma
\begin{pmatrix}
(\gamma+1)\sin(\varphi)\psi(\varphi) + \cos(\varphi)\psi^\prime(\varphi)
\\
-(\gamma+1)\cos(\varphi)\psi(\varphi) + \sin(\varphi)\psi^\prime(\varphi)
\end{pmatrix}^T,\\
 p_0(r,\varphi) &:= \nu r^{(\gamma-1)}((1+\gamma)^2 \psi^\prime(\varphi)+\psi^{\prime\prime\prime}(\varphi))/(1-\gamma)
\end{align*}
where
\begin{align*}
\psi(\varphi) &:=
\frac{1}{\gamma+1} \, \sin((\gamma+1)\varphi)\cos(\gamma\omega) - \cos((\gamma+1)\varphi)\\
&\qquad- \frac{1}{\gamma-1} \, \sin((\gamma-1)\varphi)\cos(\gamma\omega) + \cos((\gamma-1)\varphi)
\end{align*}
and $\gamma = 856399/1572864 \approx 0.54$, $\omega = 3\pi/2$
taken from \cite{Verfuerth1989}.
Note, that this yields $-\nu\Delta \bv + \nabla p_0 = 0$.
To have a nonzero right-hand side we add $p_+ := \sin(xy\pi)$ to the pressure, i.e.\ $p := p_0 + p_+$ and $\bm f := \nabla(p_+)$.
Note that the exact solutions satisfy $\bv\in \bH^{1+s}(\Omega)$ and $p\in H^s(\Omega)$ for any $s<\gamma$.
Moreover, we set the viscosity parameter to either $\nu =1$, $\nu=10^{-2}$ or $\nu=10^{-4}$.

Tables~\ref{tab:ex1brlr_1c}-\ref{tab:ex1brlr_3m} compare the $\bH^1$ errors of the classical
Bernardi--Raugel finite element method and its pressure-robust sibling on a series of unstructured uniformly red-refined meshes for $\nu = 1$ (Tables~\ref{tab:ex1brlr_1c} and \ref{tab:ex1brlr_1m}), $\nu=10^{-2}$ (Tables~\ref{tab:ex1brlr_2c} and \ref{tab:ex1brlr_2m})
and $\nu=10^{-4}$ (Tables~\ref{tab:ex1brlr_3c} and \ref{tab:ex1brlr_3m}).
For the classical method the distance between the discrete Stokes projector and the discrete solution is non-zero
and really scales with $\nu^{-1}$, but asymptotically converges with $h$
instead of $h^s$. At first glance this seems better than expected
in Theorem~\ref{thm:ConsistencyErrorEstimate_classical},
but the first term vanishes due to $\mathbb{P}(-\Delta \bv) = \nu^{-1} \mathbb{P}(\nab p_0) = 0$ in this example. This also pre-asymptotically leads to a slightly higher convergence order of the full error than in case of
$\nu=1$ at least for $\nu=10^{-2}$
and $\nu=10^{-4}$ where the $\mathcal{O}(h)$
error dominates at first.
The numbers of the modified pressure-robust variant convey that the discrete solution of the modified method and the discrete Stokes projector are identical
as predicted by Lemma~\ref{thm:ConsistencyErrorEstimate} (again due to $\mathbb{P}(-\Delta \bv) = \nu^{-1} \mathbb{P}(\nab p_0) = 0$). The numerical results
confirm that for pressure-robust methods, the
discrete velocity is independent of $\nu$.
However, this $\nu$-independence only holds up to
a quadrature error in the right-hand side,  which scales with
$\nu^{-1}$, and up to round-off errors.

\bibliographystyle{abbrv}
\bibliography{lit}

\end{document}